\theoremstyle{plain}
\newtheorem{theorem}{Theorem}
\newtheorem{corollary}{Corollary}
\newtheorem{lemma}{Lemma}
\theoremstyle{definition}
\newtheorem{example}{Example}
\theoremstyle{remark}
\newtheorem{remark}{Remark}
\numberwithin{equation}{section}
\begin{document}
\title[Assouad dimensions of Moran sets]{Assouad dimensions of Moran sets
and Cantor-like sets}
\author{Wen-Wen Li}
\address{Department of Mathematics, East China Normal University, Shanghai
200241, P.~R. China}
\email{wenwen200309@163.com}
\author{Wen-Xia Li}
\address{Department of Mathematics, Shanghai Key Laboratory of PMMP, East
China Normal University, Shanghai 200241, P.~R. China}
\email{wxli@math.ecnu.edu.cn}
\author{Jun-Jie Miao}
\address{Department of Mathematics, East China Normal University, Shanghai
200241, P.~R. China}
\email{jjmiao@math.ecnu.edu.cn}
\author{Li-Feng Xi$^{\ast}$}
\address{Institute of Mathematics, Zhejiang Wanli University, Ningbo,
Zhejiang, 315100, P.~R. China}
\email{xilifengningbo@yahoo.com}
\subjclass[2000]{28A80}
\keywords{fractal, Assouad dimension, Moran set, Cantor-like set}

\begin{abstract}
We obtain the Assouad dimensions of Moran sets under suitable condition.
Using the homogeneous set introduced in \cite{Lv}, we also study the Assouad
dimensions of Cantor-like sets.
\end{abstract}

\thanks{$^{\ast }$ Corresponding author. Wen-Xia Li was supported by the
NNSF of China (no. 11271137). Jun-Jie Miao was partially supported by the
NNSF of China (no. 11201152), the Fund for the Doctoral Program of Higher
Education of China (no. 20120076120001) and NSF of Shanghai (no.
11ZR1410300). Li-Feng Xi was supported by the NNSF of China (no. 11371329),
NCET, NSF of Zhejiang Province (Nos. LR13A1010001, LY12F02011).}
\maketitle


\section{Introduction}

Let $(X, d)$ be a metric space. We say $X$ is \textit{doubling} if there
exists an integer $N>0$ such that each ball in $X$ can be covered by $N$
balls of half the radius. Repeated applying this property, it gives that
there exist constants $b,c>0$ and $\alpha >0$ such that for all $r$ and $R$
with $0<r<R<b$, every ball $B(x,R)$ can be covered by $c(\frac{R}{r}%
)^{\alpha }$ balls of radius $r$. Let $N_{r,R}(X)$ denote the smallest
number of balls with radii $r$ which can cover a ball with radius $R$. The
\textit{Assouad dimension} of $X$, denoted by $\dim _{A}(X)$, is defined as
\begin{equation*}
\dim _{A}(X)=\inf \{\alpha \geq 0\text{ }|\text{ }\exists \text{ }b,c>0\text{
s.t. }N_{r,R}(X)\leq c(\frac{R}{r})^{\alpha }\text{ }\forall \text{ }%
0<r<R<b\},
\end{equation*}%
which was introduced by Assouad in the late 1970s \cite%
{Assouad1,Assouad2,Assouad3}. Now it plays a prominent role in the study of
quasiconformal mappings on $\mathbb{R}^{d}$, and we refer the readers to the
textbook \cite{Heinonen} and the survey paper \cite{Luukkainen} for more
details. It is well known that $\dim _{H}(X)\leq \dim _{P}(X)\leq \dim _{A}X,
$ where $\dim _{H}(\cdot ),\dim _{P}(\cdot )$ are Hausdorff and packing
dimensions respectively.

Suppose that $K$ is a compact subset of $X$ and $s$ is a non-negative real
number. We say $K$ is \textit{Ahlfors-David $s$-regular} if there exists a
Borel measure $\mu $ supported on $K$ and a constant $c\geq 1$ such that,
for all $x\in K$ and $0<r\leq |K|,$
\begin{equation}
c^{-1}r^{s}\leq \mu (B(x,r))\leq cr^{s},\text{ }  \label{AD}
\end{equation}%
where $B(x,r)$ is the closed ball centered at $x$ with radius $r$ and $%
|\cdot |$ denotes the diameter of set. Olsen \cite{Olsen} proved that for a
class of fractals with some flexible graph-directed construction, their
Assouad dimensions coincide with their Hausdorff and box dimensions. He also
pointed out that the fractals in \cite{Olsen} are Ahlfors-David regular. It
is well known that self-similar sets and self-conformal sets satisfying the
open set condition (\textbf{OSC}) are always Ahlfors-David regular, see~\cite%
{Mattila}. One advantage of such sets is that their dimensions coincide,
namely, for Ahlfors-David\emph{\ }$s$-regular set $K,$ $\dim _{A}K=\dim
_{H}K=\dim _{P}K=s$.

In general, it is difficult to compute the Assouad dimensions of sets which
are not Ahlfors-David regular. Mackay \cite{Mackay} calculated the Assouad
dimensions of two classes of self-affine fractals, namely, Bedford-McMullen
carpets~\cite{Bed} and Lalley-Gatzouras sets~\cite{Mcm},  and he also solved
the problem posed by Olsen \cite{Olsen}. Fraser \cite{Fraser} obtained
Assouad dimensions for certain classes of self-affine sets and
quasi-self-similar sets.

\smallskip

In this paper, we studied the Assouad dimension formula of Moran sets,
Cantor-like sets and homogeneous sets. Moran set was first studied by Moran
in \cite{Moran}, where most cases are not Ahlfors-David regular. First, we
recall the definition of Moran set.

Let $\{n_{k}(\geq 2)\}_{k\geq 1}$ be a sequence of positive integers$.$ For
each $k=0,1,2,\cdots$,let $D_{k}=\{u_{1}u_{2}\cdots u_{k}:1\leq u_{j}\leq
n_{j}\ $, $j\leq k\}$ be the set of words of length $k$, with $%
D_{0}=\{\emptyset \}$ containing only the empty word $\emptyset$. Let $%
D=\cup _{k=0}^{\infty }D_{k}$ be the set of all finite words. Suppose that $%
J\subset \mathbb{R}^{d}$ is a compact set with $\mbox{int}(J)\neq
\varnothing $ (we always write int($\cdot )$ for the interior of set). Let $%
\{\phi _{k}\}_{k\geq 1}$ be a sequence of positive real vectors where $\phi
_{k}=(c_{k,1},c_{k,2},\cdots ,c_{k,n_{k}})$ and $\Sigma
_{j=1}^{n_{k}}(c_{k,j})^{d}\leq 1$ for each $k\in \mathbb{N}$. We say the
collection $\mathcal{F}=\{J_{\mathbf{u}}:\mathbf{u}\in D\}$ of closed
subsets of $J$ fulfills the \textit{Moran structure} if it satisfies the
following Moran structure conditions (MSC):

(1) For each $\mathbf{u}\in D$, $J_{\mathbf{u}}$ is geometrically similar to
$J$, i.e., there exists a similarity $S_{\mathbf{u}}:\mathbb{R}%
^{d}\rightarrow \mathbb{R}^{d}$ such that $J_{\mathbf{u}}=S_{\mathbf{u}}(J)$%
. We write $J_{\emptyset }=J$ for empty word $\emptyset $.

(2) For all $k\in \mathbb{N}$ and $\mathbf{u}\in D_{k-1}$, the elements $J_{%
\mathbf{u}1}, J_{\mathbf{u}2},\cdots ,J_{\mathbf{u}n_{k}}$ of $\mathcal{F}$
are the subsets of $J_{\mathbf{u}}$ with disjoint interiors, ie., $\mbox{int}%
(J_{\mathbf{u}i})\cap \mbox{int}(J_{\mathbf{u}i^{\prime }})=\varnothing $
for $i\neq i^{\prime }$. Moreover, for all $1\leq i\leq n_{k} $,
\begin{equation*}
\frac{|J_{\mathbf{u}i}|}{|J_{\mathbf{u}}|}=c_{k,i}.
\end{equation*}%
We call $E=E(\mathcal{F})=\bigcap\nolimits_{k=1}^{\infty }\bigcup\nolimits_{%
\mathbf{u}\in D_{k}}J_{\mathbf{u}}$ a \textit{Moran set} determined by $%
\mathcal{F}$. For all $\mathbf{u}\in D_{k}$, the elements $J_{\mathbf{u}}$
are called \textit{\ $k$th-level basic sets} of $E$. Suppose the set $J$ and
the sequences $\{n_{k}\}$ and $\{\phi _{k}\}$ are given. We denote by $%
\mathcal{M}=\mathcal{M}(J,\{n_{k}\},\{\phi _{k}\})$ the class of the Moran
sets satisfying the MSC.

For any $k^{\prime }>k,$ let $s_{k,k^{\prime }}$ be the unique real solution
of the equation $\Delta _{k,k^{\prime }}(s)=1$, where
\begin{equation}  \label{eqns}
\Delta _{k,k^{\prime }}(s)=\prod\nolimits_{i=k+1}^{k^{\prime }}\left(
\sum\nolimits_{j=1}^{n_{i}}(c_{i,j})^{s}\right) .
\end{equation}
If the sequence $\{\sup\nolimits_{k}s_{k,k+m}\}_{m=1}^\infty$ converges, we
write
\begin{equation*}
s^{\ast \ast }=\lim_{m\rightarrow \infty }\left(
\sup\nolimits_{k}s_{k,k+m}\right).
\end{equation*}

In Section 2, we prove that the sequence $\{\sup\nolimits_{k}s_{k,k+m}%
\}_{m=1}^\infty$ is indeed convergent under the assumption $c_{\ast
}=\inf_{i,j}c_{i,j}>0.$ Furthermore, The following theorem indicates that
the limit is the Assouad dimension of Moran sets.

\begin{theorem}
\label{thmMS} Suppose that $\mathcal{M}=\mathcal{M}(J,\{n_{k}\},\{\phi
_{k}\})$ is a Moran class with $c_{\ast }=\inf_{i,j}c_{i,j}>0.$ Then, for
all $E\in \mathcal{M}$,
\begin{equation*}
\dim _{A}E=s^{\ast \ast}.
\end{equation*}
\end{theorem}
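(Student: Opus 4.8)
The plan is to prove the two inequalities $\dim_A E\le s^{\ast\ast}$ and $\dim_A E\ge s^{\ast\ast}$ separately, after first reducing the covering numbers $N_{r,R}(E)$ to a combinatorial count of basic sets. The hypothesis $c_{\ast}>0$ is used throughout: since $n_i\ge2$ and $\sum_{j}(c_{i,j})^d\le1$, it forces $n_i\le c_{\ast}^{-d}$ and $c_{\ast}\le c_{i,j}\le c^{\ast}:=(1-c_{\ast}^{d})^{1/d}<1$ for all $i,j$, so both the number of children and the contraction ratios are uniformly controlled. I would first record the elementary bounds $2c_{\ast}^{s}\le\sum_{j=1}^{n_i}(c_{i,j})^s\le c_{\ast}^{-d}(c^{\ast})^{s}$, valid at every level $i$, which bound each factor of $\Delta_{k,k'}(s)$ from above and below (call these bounds $\alpha$ and $A$). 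Because each $J_{\mathbf u}=S_{\mathbf u}(J)$ is a similar copy of a fixed $J$ with nonempty interior and the children have disjoint interiors, a volume-packing argument in $\mathbb R^d$ shows that any ball of radius $R$ meets at most $M$ basic sets whose diameters lie in $[c_{\ast}R,R]$, and that each basic set of diameter in $[c_{\ast}r,r]$ is covered by a bounded number of balls of radius $r$, with $M$ depending only on $d,c_{\ast}$. Consequently, writing $\mathcal A(\mathbf u,r)$ for the stopping antichain of descendants $\mathbf w\succeq\mathbf u$ with $|J_{\mathbf w}|\le r<|J_{\mathbf w^{-}}|$, the number $N_{r,R}(E)$ is comparable, up to such fixed multiplicative constants, to $\max_{\mathbf u}\#\mathcal A(\mathbf u,r)$ where $\mathbf u$ runs over basic sets with $|J_{\mathbf u}|\in[c_{\ast}R,R]$.

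For the upper bound, fix $s>s^{\ast\ast}$. By the convergence of $\{\sup_k s_{k,k+m}\}_m$ proved in Section~2 I may choose $m$ with $\sup_k s_{k,k+m}<s$; since $\Delta_{k,k+m}$ is strictly decreasing in $s$ and $c_{i,j}\le c^{\ast}<1$, the factorization $\sum_j(c_{i,j})^{s}\le(c^{\ast})^{s-s_{k,k+m}}\sum_j(c_{i,j})^{s_{k,k+m}}$ yields a uniform contraction $\Delta_{k,k+m}(s)\le(c^{\ast})^{m(s-\sup_k s_{k,k+m})}=:\theta<1$ for all $k$. The main step is then to bound the weighted antichain sum $S:=\sum_{\mathbf w\in\mathcal A(\mathbf u,r)}(|J_{\mathbf w}|/|J_{\mathbf u}|)^{s}$ by a constant independent of $\mathbf u$ and $r$. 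I would do this by grouping the descendant tree of $\mathbf u$ into blocks of $m$ levels and tracking the nodes that survive to each block boundary; the multiplicativity $\Delta_{k,k+2m}(s)=\Delta_{k,k+m}(s)\,\Delta_{k+m,k+2m}(s)$ together with $\Delta_{k,k+m}(s)\le\theta$ gives a telescoping estimate $T_{g+1}\le\theta\,T_g$ for the total $s$-weight $T_g$ carried by the survivors at the $g$-th block boundary, hence $T_g\le\theta^{g}$; summing the bounded intra-block contributions $\sum_{t=0}^{m-1}\sup_k\Delta_{k,k+t}(s)$ over $g$ then gives $S\le C$ with $C$ depending only on $m,s$. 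Since every $\mathbf w\in\mathcal A(\mathbf u,r)$ has $|J_{\mathbf w}|\ge c_{\ast}r$, this forces $\#\mathcal A(\mathbf u,r)\le c_{\ast}^{-s}(|J_{\mathbf u}|/r)^{s}S\le C'(R/r)^{s}$, whence $\dim_A E\le s$; letting $s\downarrow s^{\ast\ast}$ finishes this half.

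For the lower bound, fix $s<s^{\ast\ast}$. A short shift argument (changing the starting level $k$ by one multiplies $\Delta_{k,k+m}(s)$ by a factor in $[\alpha/A,A/\alpha]$) shows that $\lim_m\sup_{k\ge k_0}s_{k,k+m}=s^{\ast\ast}$ for every $k_0$, so for any prescribed $b>0$ I may pick $k_0$ with $|J|(c^{\ast})^{k_0}<b$, then $m$ large and $k\ge k_0$ with $s_{k,k+m}>s$, i.e.\ $\Delta_{k,k+m}(s)>1$. Fixing $\mathbf u\in D_{k}$ and setting $R=|J_{\mathbf u}|<b$ and $r=(c^{\ast})^{m}R$, every level-$(k+m)$ descendant of $\mathbf u$ has diameter in $[c_{\ast}^{m}R,r]$, these descendants have disjoint interiors, and bounded multiplicity shows a single ball of radius $r$ meets at most $M'$ of them. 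Using $\sum_{j}(c_{i,j})^{s}\le n_i(c^{\ast})^{s}$ inside $\Delta_{k,k+m}(s)>1$ gives $\prod_{i=k+1}^{k+m}n_i>(c^{\ast})^{-ms}=(R/r)^{s}$, so $N_{r,R}(E)\ge M'^{-1}\prod_i n_i>M'^{-1}(R/r)^{s}$. As $m\to\infty$ one keeps $R<b$ while $r\to0$ and $R/r=(c^{\ast})^{-m}\to\infty$, so no bound $N_{r,R}\le c(R/r)^{\sigma}$ with $\sigma<s$ can hold; thus $\dim_A E\ge s$, and letting $s\uparrow s^{\ast\ast}$ completes the proof.

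The main obstacle is the uniform estimate $S\le C$ in the upper half. Because the ratios vary within each level, the antichain $\mathcal A(\mathbf u,r)$ consists of words of many different lengths, and one cannot simply extend them to a common level without losing control, since the intervening $\Delta$-factors need not exceed $1$. The block-generation survivor telescope, which overcounts only inside a single block of $m$ levels and so costs merely the fixed constant $\sum_{t<m}\sup_k\Delta_{k,k+t}(s)$, is precisely what converts the per-block contraction $\Delta_{k,k+m}(s)\le\theta<1$ into the scale-independent bound; getting this telescoping right, and matching the two scales $r,R$ against the defining equation of $s_{k,k+m}$, is the crux of the argument.
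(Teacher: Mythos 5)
Your upper bound is sound and is a legitimate alternative to the paper's route: where you run a block-of-$m$-levels telescoping estimate on the weighted antichain sum $S=\sum_{\mathbf w\in\mathcal A(\mathbf u,r)}(|J_{\mathbf w}|/|J_{\mathbf u}|)^{s}$, the paper instead puts the natural probability measure $\mu([\mathbf u\ast\mathbf v])=(c_{\mathbf v})^{s}/\prod_{p}\sum_{q}c_{p,q}^{s}$ on cylinders (Lemma~\ref{lem1}); since the denominators are uniformly bounded by $\varpi^{N}$ once $s>s^{\ast\ast}$, the identity $\sum_{\mathcal A_{\mathbf u}(\delta)}\mu=1$ delivers $\sharp\mathcal A_{\mathbf u}(\delta)\le \varpi^{N}c_{\ast}^{-s}\delta^{-s}$ in one line. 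Both arguments control the same quantity and your telescoping, with the per-block contraction $\Delta_{k,k+m}(s)\le\theta<1$ and the bounded intra-block factor, does close correctly.

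The lower bound, however, has a genuine gap, and it sits exactly where you did not expect the difficulty to be. You take all level-$(k+m)$ descendants of $\mathbf u$, set $r=(c^{\ast})^{m}R$, and claim a single ball of radius $r$ meets at most $M'$ of them with $M'$ independent of $m$. This fails: the diameters of those descendants range over $[c_{\ast}^{m}R,(c^{\ast})^{m}R]$, a multiplicative spread of $(c^{\ast}/c_{\ast})^{m}$ which is unbounded as $m\to\infty$ (the ratios within a level need not be equal). A volume count only gives $M'\lesssim\bigl(r/(c_{\ast}^{m}R)\bigr)^{d}=(c^{\ast}/c_{\ast})^{md}$, so the resulting estimate $N_{r,R}(E)\ge M'^{-1}\prod_{i}n_{i}$ degrades by an exponentially large factor in $m$ and no longer forces $\dim_{A}E\ge s$. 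Your argument is valid only in the homogeneous case $c_{k,1}=\cdots=c_{k,n_{k}}$ of the Corollary. The missing idea is the paper's dyadic pigeonhole: decompose $D_{m_{k}+1,m_{k}'}$ into the bands $\mathcal B_{p,k}=\{\mathbf j:2^{-p-1}<c_{\mathbf j}\le 2^{-p}\}$, use $\sum_{p}\sharp\mathcal B_{p,k}2^{-ps}>1$ together with the weights $2^{-\varepsilon p}(1-2^{-\varepsilon})$ to extract a single scale $q_{k}$ with $\sharp\mathcal B_{q_{k},k}\ge 2^{q_{k}(s-\varepsilon)}(1-2^{-\varepsilon})$, and only then set $r_{k}\asymp 2^{-q_{k}}R_{k}$; within one dyadic band the diameters are comparable up to a factor $2/c_{\ast}$, so the multiplicity constant $l'$ really is uniform. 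Without some such selection of a common scale inside the antichain, the lower bound does not go through.
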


As an immediate consequence, we have the following corollary.

\begin{corollary}
Suppose that $\mathcal{M}=\mathcal{M}(J,\{n_{k}\},\{\phi _{k}\})$ is a Moran
class with $c_{\ast }=\inf_{i,j}c_{i,j}>0.$ Let $c_{k,1}=c_{k,2}=\cdots
=c_{k,n_{k}}=c_{k}\ $ for each $k\in\mathbb{N}$. Then, for all $F\in
\mathcal{M}$,
\begin{equation*}
\dim _{A}F=\lim_{m\rightarrow \infty }\left( \sup_{k}\frac{\log (n_{k}\cdots
n_{k+m})}{-\log (c_{k}\cdots c_{k+m})}\right) .
\end{equation*}
\end{corollary}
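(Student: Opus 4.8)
The plan is to obtain the corollary as a direct specialization of Theorem~\ref{thmMS}, the only work being to evaluate the defining quantities $s_{k,k'}$ explicitly when all contraction ratios at a given level coincide. Since $c_{k,1}=\cdots=c_{k,n_k}=c_k$, the inner sum in \eqref{eqns} collapses to $\sum_{j=1}^{n_i}(c_{i,j})^{s}=n_i c_i^{s}$, so that
\begin{equation*}
\Delta_{k,k'}(s)=\prod_{i=k+1}^{k'} n_i c_i^{s}.
\end{equation*}

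First I would solve $\Delta_{k,k'}(s)=1$. Taking logarithms converts the product into the affine equation $\sum_{i=k+1}^{k'}(\log n_i+s\log c_i)=0$. The Moran structure gives $n_i\ge 2$ and $n_i c_i^{d}\le 1$, hence $c_i<1$ and $\log c_i<0$, so the coefficient of $s$ is strictly negative and the unique solution is
\begin{equation*}
s_{k,k'}=\frac{\log(n_{k+1}\cdots n_{k'})}{-\log(c_{k+1}\cdots c_{k'})}.
\end{equation*}
In particular, taking $k'=k+m+1$ and reindexing $k+1\mapsto k$ shows that $s_{k,k+m+1}$ is precisely the summand $\frac{\log(n_k\cdots n_{k+m})}{-\log(c_k\cdots c_{k+m})}$ appearing in the corollary.

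Finally I would combine the two ingredients. By Theorem~\ref{thmMS}, $\dim_A F=s^{\ast\ast}=\lim_{m\to\infty}\bigl(\sup_k s_{k,k+m}\bigr)$, and since this sequence in $m$ is convergent (as established in Section~2), replacing $m$ by $m+1$ leaves the limit unchanged. The identity just obtained gives $\sup_k s_{k,k+m+1}=\sup_k \frac{\log(n_k\cdots n_{k+m})}{-\log(c_k\cdots c_{k+m})}$, so passing to the limit yields exactly the asserted formula. I do not expect any genuine obstacle here: once Theorem~\ref{thmMS} is available the corollary reduces to the elementary computation above, and the only point requiring attention is the harmless shift of indices between the product range $i=k+1,\dots,k'$ used in $\Delta_{k,k'}$ and the range $i=k,\dots,k+m$ used in the statement, which the supremum over $k$ and the limit over $m$ absorb exactly.
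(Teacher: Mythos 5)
Your proposal is correct and matches the paper's intent: the paper states the corollary as an immediate consequence of Theorem~\ref{thmMS} without proof, and your explicit computation $s_{k,k'}=\log(n_{k+1}\cdots n_{k'})/(-\log(c_{k+1}\cdots c_{k'}))$ together with the index shift (absorbed by the supremum and the convergence of the sequence $\{\sup_k s_{k,k+m}\}_m$ established in Section~2) is exactly the intended specialization.
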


Let $s_{\ast }$ and $s^{\ast }$ be the upper and lower limits of the
sequence $\{s_{0,m}\}_{m=1}^\infty$, that is,
\begin{equation*}
s_{\ast }=\underline{\lim }_{m\rightarrow \infty }s_{0,m} \text{ and }
s^{\ast }=\overline{\lim }_{m\rightarrow \infty }s_{0,m}.
\end{equation*}%
It was shown in \cite{Hua,HuaLi,Wen1,Wen2} that, for all $E\in \mathcal{M}$
with $c_{\ast }>0,$
\begin{equation*}
\dim _{H}E=s_{\ast }\text{ and }\dim _{P}E=s^{\ast }.
\end{equation*}%
In next example, we will construct a Moran set to satisfy $\dim _{H}E<\dim
_{P}E<\dim _{A}E.$

Note that it is also a counter-example to the conclusion in \cite{Li}.
Hereby, Theorem~\ref{thmMS} corrects their main conclusion.

\begin{example}
Let $\{p_{i}\}_{i}$ be an increasing sequence of integers such that $%
p_{i+1}-p_{i}>i$ for all $i$ and
\begin{equation*}
\lim_{i\rightarrow \infty }\frac{p_{i-1}}{p_{i}-p_{i-1}}=\lim_{i\rightarrow
\infty }\frac{i}{p_{i}-p_{i-1}}=0.
\end{equation*}%
Let $J=[0,1],n_{k}\equiv 2$ and
\begin{equation*}
c_{k,1}=c_{k,2}=\left\{
\begin{array}{ll}
1/4 & \text{if }k\in \lbrack p_{i}+1,p_{i}+i]\text{ for some\ }i\in \mathbb{N%
}, \\
1/8 & \text{if }k\in \lbrack p_{i}+i+1,p_{i+1}]\text{ for some\ even }i\in
\mathbb{N}, \\
1/16 & \text{if }k\in \lbrack p_{i}+i+1,p_{i+1}]\text{ for some\ odd }i\in
\mathbb{N}.%
\end{array}%
\right.
\end{equation*}%
Then we have $s_{\ast }=\frac{1}{4},$ $s^{\ast }=\frac{1}{3},$ $s^{\ast \ast
}=\frac{1}{2}.$ Clearly, for all $E\in \mathcal{M}(J,\{n_{k}(\equiv
2)\},\{(c_{k,1},c_{k,2})\})$, the dimensions inequality strictly holds, that
is ,
\begin{equation*}
\dim _{H}E=\frac{1}{4}<\dim _{P}E=\frac{1}{3}<\dim _{A}E=\frac{1}{2}.
\end{equation*}
\end{example}

Suppose $\{a_{n}\}$ is a sequence of positive numbers with $%
\sum\nolimits_{n}a_{n}<\infty .$ Given sequences $\{c_{k}\}_{k\geq 1\text{ }
} $and $\{n_{k}\}_{k\geq 1}$ such that $c_{k}\in (0,1)$ and $n_{k}\in
\mathbb{N}\cap \lbrack 2,\infty )$ for all $k\in\mathbb{N}.$ We always
assume that $c_{\ast }=\inf_{k}c_{k}>0.$ Let $I$ be the initial set such
that $\mbox{int}(I)\neq \varnothing $. For each $i_{1}\cdots i_{k-1}\in
D_{k-1},$ suppose that $I_{i_{1}\cdots i_{k-1}1}$, $I_{i_{1}\cdots i_{k-1}2}$%
, $\cdots$, $I_{i_{1}\cdots i_{k-1}n_{k}}\subset I_{i_{1}\cdots i_{k-1}}$
are geometrically similar to $I_{i_{1}\cdots i_{k-1}}$ such that
\begin{equation*}
c_{k}(1-a_{k})\leq \frac{|I_{i_{1}\cdots i_{k-1}j}|}{|I_{i_{1}\cdots
i_{k-1}}|}\leq c_{k}(1+a_{k}), \quad j=1,2,\cdots,n_k,
\end{equation*}%
where the interiors of $I_{i_{1}\cdots i_{k-1}j}$ are pairwise disjoint. We
call
\begin{equation*}
K=\bigcap \nolimits_{k=1}^{\infty }\bigcup\nolimits_{i_{1}\cdots i_{k}\in
D_{k}}I_{i_{1}\cdots i_{k}}
\end{equation*}
a \textit{Cantor-like set}, and we write $\mathcal{C}(I,\{c_{k}\}_{k},%
\{n_{k}\}_{k},\{a_{k}\}_{k})$ for the collection of such sets.

\begin{remark}
Cantor-like sets may not be Moran sets.
\end{remark}

\begin{theorem}
Suppose that $K\in \mathcal{C}(I,\{c_{k}\}_{k},\{n_{k}\}_{k},\{a_{k}\}_{k})$
is a Cantor-like set$.$ Then
\begin{equation*}
\dim _{A}K=\lim_{m\rightarrow \infty }\left( \sup_{k}\frac{\log (n_{k}\cdots
n_{k+m})}{-\log (c_{k}\cdots c_{k+m})}\right) .
\end{equation*}
\end{theorem}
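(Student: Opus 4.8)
The plan is to exploit the hypothesis $\sum_{k}a_{k}<\infty$ to reduce $K$ to its associated homogeneous Moran class and then re-run the covering estimates directly, since by the Remark $K$ itself need not be a Moran set. Writing $\gamma_{k}=|I|\prod_{l=1}^{k}c_{l}$, I would first combine the telescoping identity $|I_{\mathbf{u}}|=|I|\prod_{l=1}^{k}\bigl(|I_{i_{1}\cdots i_{l}}|/|I_{i_{1}\cdots i_{l-1}}|\bigr)$ with the bounds $c_{l}(1-a_{l})\le |I_{i_{1}\cdots i_{l}}|/|I_{i_{1}\cdots i_{l-1}}|\le c_{l}(1+a_{l})$ to obtain
\begin{equation*}
A_{0}\,\gamma_{k}\le |I_{\mathbf{u}}|\le B_{0}\,\gamma_{k}\qquad(\mathbf{u}\in D_{k}),
\end{equation*}
where $A_{0}=\prod_{l}(1-a_{l})>0$ and $B_{0}=\prod_{l}(1+a_{l})<\infty$ are finite positive constants precisely because $\sum_{l}a_{l}<\infty$. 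Thus every $k$th-level basic set has diameter comparable to $\gamma_{k}$, uniformly in $\mathbf{u}$ and $k$, and $K$ carries exactly the branching numbers $n_{1},n_{2},\dots$ of the homogeneous Moran class $\mathcal{M}(I,\{n_{k}\},\{(c_{k},\dots,c_{k})\})$, whose Assouad dimension is, by the Corollary, the right-hand side of the asserted formula; I will denote this value $\sigma$.

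Next I would record a uniform bounded-overlap estimate. Since $\operatorname{int}(I)\neq\varnothing$, $I$ contains a ball of radius $\eta>0$, so each $I_{\mathbf{u}}$ with $\mathbf{u}\in D_{k}$, being a similar copy of $I$ of diameter $\asymp\gamma_{k}$, has Lebesgue measure $\gtrsim\gamma_{k}^{\,d}$; as the level-$k$ basic sets have pairwise disjoint interiors, a volume comparison shows that any ball of radius $T\asymp\gamma_{k}$ meets at most a constant $C=C(d,A_{0},B_{0},I)$ of them, and that each $I_{\mathbf{u}}$ with $|I_{\mathbf{u}}|\asymp r$ is covered by a bounded number $C'$ of balls of radius $r$. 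For the upper bound, given $0<r<R<|I|$, the inequalities $c_{\ast}\le c_{k}<1$ force $\gamma_{k+1}/\gamma_{k}=c_{k+1}\in(c_{\ast},1)$, so I may choose $k<k'$ with $\gamma_{k}\asymp R$ and $\gamma_{k'}\asymp r$. Covering the $\le C$ level-$k$ basic sets meeting $B(x,R)$ by their $n_{k+1}\cdots n_{k'}$ level-$k'$ descendants and then by $r$-balls yields $N_{r,R}(K)\le CC'\,n_{k+1}\cdots n_{k'}$. Since $n_{k+1}\cdots n_{k'}=(c_{k+1}\cdots c_{k'})^{-s_{k,k'}}\asymp(R/r)^{s_{k,k'}}$ and $\sup_{k}s_{k,k+m}\to\sigma$ as $m\to\infty$, this gives $N_{r,R}(K)\le c\,(R/r)^{\sigma+\epsilon}$ for each $\epsilon>0$ (the finitely many small gaps $k'-k=m$ affecting only the constant), whence $\dim_{A}K\le\sigma$.

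For the matching lower bound I would fix $s<\sigma$ and use $\sup_{k}s_{k,k+m}\to\sigma$ to find, for each large $m$, an index $k_{m}$ with $s_{k_{m},k_{m}+m}>s$. Choosing a level-$k_{m}$ basic set $I_{\mathbf{u}}$, setting $R\asymp\gamma_{k_{m}}=|I_{\mathbf{u}}|$ and $r\asymp\gamma_{k_{m}+m}$, and taking $B(x,R)\supseteq I_{\mathbf{u}}$ centred at a point of $K\cap I_{\mathbf{u}}$, the set $I_{\mathbf{u}}$ contains $n_{k_{m}+1}\cdots n_{k_{m}+m}$ level-$(k_{m}+m)$ basic sets, each meeting $K$ and each of diameter $\asymp r$; by the bounded-overlap estimate a single $r$-ball meets at most $C$ of them, so $N_{r,R}(K)\ge C^{-1}n_{k_{m}+1}\cdots n_{k_{m}+m}\asymp(R/r)^{s_{k_{m},k_{m}+m}}\ge(R/r)^{s}$. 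As $m\to\infty$ one has $R/r\gtrsim c_{\ast}^{-m}\to\infty$, so no estimate $N_{r,R}(K)\le c(R/r)^{\alpha}$ with $\alpha<s$ can hold at all scales, forcing $\dim_{A}K\ge s$; letting $s\uparrow\sigma$ and combining with the upper bound gives $\dim_{A}K=\sigma$. The crux of the argument, and the only place where $\sum_{k}a_{k}<\infty$ is genuinely needed, will be the uniform bounded-overlap estimate: it guarantees that the perturbed cylinder diameters remain comparable to $\gamma_{k}$ simultaneously at every level, so that the scale-matching between the Cantor-like set and its homogeneous Moran template costs only fixed multiplicative constants and leaves the exponent untouched.
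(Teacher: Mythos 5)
Your argument is correct in substance, but it takes a genuinely different route from the paper's. The paper disposes of this theorem in one paragraph: it observes (citing \cite{Lv}) that a Cantor-like set is \emph{homogeneous} with scale function $h(r)=\frac{\log n_1\cdots n_k}{-\log c_1\cdots c_k}$ for $c_1\cdots c_k|I|<r\le c_1\cdots c_{k-1}|I|$, and then reads the formula off Theorem 3, whose proof converts the scale function into two-sided bounds on $N_{r,R}(K)$ via the doubling measure of (\ref{self1})--(\ref{***}). You instead bypass homogeneity entirely and re-run the covering/counting scheme of Theorem 1 directly on the perturbed cylinders, using $\sum_k a_k<\infty$ to get the uniform comparability $|I_{\mathbf u}|\asymp |I|\prod_{l\le k}c_l$ and a Falconer-type volume overlap bound; your lower bound is in fact simpler than the one in Theorem 1, because all level-$(k+m)$ descendants of a fixed cylinder have comparable diameters, so the dyadic pigeonholing over the classes $\mathcal{B}_{p,k}$ is unnecessary. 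What your approach buys is self-containedness (no reliance on Theorem 3 or on the machinery of \cite{Lv}); what the paper's approach buys is a unified treatment in which the Hausdorff, packing and Assouad dimensions of $K$ are all read off the same scale function. Two small points to tidy: (i) in the lower bound you need $R/r\to\infty$ as $m\to\infty$, which follows from $\sup_k c_k<1$ (a consequence of the volume inequality $n_k\bigl(c_k(1-a_k)\bigr)^d\le 1$ together with $a_k\to 0$), not from ``$R/r\gtrsim c_*^{-m}$'' --- that inequality points the wrong way, since $c_*$ is a \emph{lower} bound for the $c_k$ and only yields $R/r\lesssim c_*^{-m}$; (ii) if some $a_k\ge 1$ the factor $1-a_k$ gives no information, but only finitely many levels are affected and each level contains finitely many cylinders, so your constants $A_0,B_0$ survive after an obvious adjustment.
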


In fact, for Ahlfors-David regular set, using (\ref{AD}), there exist
constants$\ 0<\eta <1\leq \lambda $ and $1<\delta \leq \Delta <\infty $ such
that, for all $x,x^{\prime }\in K$ and $r\leq |K|,$
\begin{equation}
\lambda ^{-1}\leq \frac{\mu (B(x,r))}{\mu (B(x^{\prime },r))}\leq \lambda ,
\label{self1}
\end{equation}%
\begin{equation}
\delta \leq \frac{\mu (B(x,r))}{\mu (B(x,\eta r))}\leq \Delta .\text{ }
\label{***}
\end{equation}%
It follows from (\ref{***}) that the measure $\mu $ and set $K$ are
doubling, and $K$ is uniformly perfect \cite{Lv}. We say a compact subset $K$
of $X$ is \textit{homogeneous} if there exists a Borel measure $\mu $
supported on $K$ satisfying $(\ref{self1})$ and $(\ref{***}),$ and we refer
the readers  to~\cite{Lv} for details.

\begin{remark}
All Ahlfors-David regular sets are homogeneous, but homogeneous sets may not
be Ahlfors-David regular.
\end{remark}

Given a point $x\in K,$ we write
\begin{equation}
\alpha _{x}(r)=\frac{\log \mu (B(x,r))}{\log r},  \label{E:equ0}
\end{equation}%
for $0<r\leq |K|$. Here $\alpha _{x}(r)$ is like the function with respect
to pointwise dimension of measure.

Given $\epsilon>0$, we write
\begin{equation*}
\Omega =\{g(r):(0,\varepsilon )\rightarrow \mathbb{R}^{+}|\text{ }%
0<\inf_{r<\varepsilon }g(r)\leq \sup_{r<\varepsilon }g(r)<\infty \}.
\end{equation*}%
For each $g\in \Omega ,$ we focus on the behavior of function $g(r)$ as $r$
tends to $0.$ If a mapping $h\in\Omega $ satisfies that, for all $%
r<\varepsilon ,$%
\begin{equation}
\left\vert h(r)-g(r)\right\vert \leq C|\log r|^{-1}  \label{E:equ}
\end{equation}%
for some constant $C,$ we say $h$ and $g$ are \textit{equivalent}, denoted
by $g\sim h$, and we write equivalence class $[g]=\{h:g\sim h\}.$ By the
result of \cite{Lv}, we have $\alpha _{x}(r)\in \Omega .$ Notice that $%
\alpha _{x}(r)\sim \alpha _{x^{\prime }}(r)$ by (\ref{self1}), we use $h(r)$
to denote any function in the equivalence class $[\alpha _{x}(r)]$ with $%
x\in K$, and $h(r)$ is called a \textit{scale function} of $K.$

\begin{remark}
For Ahlfors-David $s$-regular set$,$ we can take $h(r)\equiv s.$
\end{remark}

It is easy to check $\dim _{H}K=\liminf_{r\rightarrow 0}h(r)$ and $\dim
_{P}K=\limsup_{r\rightarrow 0}h(r)$, see~\cite{Lv}. Similarly, scale
functions also play an important role in the Assouad dimension formula of
homogeneous sets.

\begin{theorem}
Suppose $K$ is homogeneous with a scale function $h(r)$. Then
\begin{equation*}
\dim _{A}K=\lim_{\rho \rightarrow 0}\left( \sup_{R}\left\vert \frac{h(R)\log
R-h(\rho R)\log (\rho R)}{\log \rho }\right\vert \right) .
\end{equation*}
\end{theorem}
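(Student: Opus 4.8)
The plan is to translate the Assouad dimension, which is defined in terms of covering numbers $N_{r,R}(K)$ of balls, into a statement about the scale function $h(r)$ via the homogeneity hypotheses \eqref{self1} and \eqref{***}. The key bridge is that for a homogeneous set the measure of a ball is essentially determined by its radius alone, uniformly in the center: by \eqref{self1} and the definition $\alpha_x(r) = \log\mu(B(x,r))/\log r$, the value $\mu(B(x,r))$ behaves like $r^{h(r)}$ up to the $|\log r|^{-1}$ error built into the equivalence relation \eqref{E:equ}, and this error is exactly of the size that disappears after taking logarithms and dividing by $\log$ of a radius. I would first record a clean covering estimate: for $0 < r < R$ small, a standard doubling/mass-distribution argument shows that the minimal number $N_{r,R}(K)$ of $r$-balls needed to cover a ball $B(x,R) \cap K$ satisfies
\begin{equation*}
N_{r,R}(K) \asymp \frac{\mu(B(x,R))}{\mu(B(x,r))},
\end{equation*}
where the implied constants are uniform in $x$ and depend only on the doubling constant $\Delta$ from \eqref{***}. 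This is where uniform perfectness and the doubling property do the real work, and I would lean on the results of \cite{Lv} for the measure-theoretic inputs.

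Next I would compute the exponent. Writing $r = \rho R$ with $\rho \in (0,1)$, the covering ratio becomes
\begin{equation*}
\frac{\mu(B(x,R))}{\mu(B(x,\rho R))} = \frac{R^{\alpha_x(R)}}{(\rho R)^{\alpha_x(\rho R)}},
\end{equation*}
so that
\begin{equation*}
\log N_{\rho R, R}(K) = \alpha_x(R)\log R - \alpha_x(\rho R)\log(\rho R) + O(1),
\end{equation*}
and by the equivalence $\alpha_x \sim h$ I may replace $\alpha_x$ by the scale function $h$ at the cost of an error of order $|\log R|^{-1}\cdot|\log R| = O(1)$, using that $|\log R|$ and $|\log(\rho R)|$ appear multiplied against the $|\log r|^{-1}$ error terms. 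Dividing $\log N_{\rho R,R}(K)$ by $-\log \rho = \log(R/r)$ and recalling the definition of $\dim_A K$ as the infimum of admissible exponents $\alpha$ satisfying $N_{r,R}(K) \le c(R/r)^\alpha$, the Assouad dimension emerges as
\begin{equation*}
\dim_A K = \limsup_{\rho \to 0}\ \sup_{R}\ \frac{\log N_{\rho R, R}(K)}{-\log \rho} = \lim_{\rho\to 0}\left(\sup_R \left|\frac{h(R)\log R - h(\rho R)\log(\rho R)}{\log\rho}\right|\right),
\end{equation*}
the absolute value being harmless since the covering number is at least one so the quantity is nonnegative for the relevant scales.

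The main obstacle, and the step I would spend the most care on, is justifying the passage from $\limsup$ over pairs $(r,R)$ to the clean limit $\lim_{\rho\to 0}$ asserted in the statement; this requires showing that the supremum over $R$ of the displayed quantity is \emph{monotone} (or at least convergent) as $\rho \to 0$, analogous to the convergence of $\{\sup_k s_{k,k+m}\}_m$ proved in Section~2 for Moran sets. I expect to establish this by a submultiplicativity argument: the covering numbers satisfy $N_{r,R} \le N_{r,t}\,N_{t,R}$ for intermediate scales $t$, which after taking logarithms gives a subadditivity that forces convergence of the normalized suprema via a Fekete-type lemma. The secondary technical point is controlling the additive $O(1)$ and $|\log r|^{-1}$ errors uniformly across the doubly-indexed family of scales so that they genuinely vanish after the normalization by $-\log\rho \to \infty$; here the hypothesis $0 < \inf_{r<\varepsilon} h(r) \le \sup_{r<\varepsilon} h(r) < \infty$ guaranteeing $h \in \Omega$ is essential to keep $h(R)\log R$ and $h(\rho R)\log(\rho R)$ comparable in magnitude to $\log R$ and $\log(\rho R)$ respectively.
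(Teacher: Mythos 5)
Your proposal is correct and follows essentially the same route as the paper: the covering-number-to-measure-ratio estimate $N_{r,R}(K)\asymp \mu(B(x_0,R))/\mu(B(x_0,r))$ via the uniformity \eqref{self1} and the doubling consequence of \eqref{***}, the $O(1)$ replacement of $\alpha_{x_0}$ by $h$ from \eqref{E:equ}, and the submultiplicativity $N_{r_1,r_3}\le N_{r_1,r_2}N_{r_2,r_3}$ plus a Fekete-type argument to upgrade the $\limsup$ to a genuine limit are precisely the contents of the paper's Lemma~\ref{qq} and its proof of Theorem~3.
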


\begin{remark}
Suppose that $h(r)$ is defined on $(0,\varepsilon ).$ Using $(\ref{***}),$
we can obtain that for all $\varepsilon _{1},\varepsilon _{2}\leq
\varepsilon ,$
\begin{equation*}
\lim_{\rho \rightarrow 0}\sup_{R<\varepsilon _{1}}\psi (R,\rho )=\lim_{\rho
\rightarrow 0}\sup_{R<\varepsilon _{2}}\psi (R,\rho ),
\end{equation*}%
where $\psi (R,\rho )=\left\vert \frac{h(R)\log R-h(\rho R)\log (\rho R)}{%
\log \rho }\right\vert .$
\end{remark}

For each Cantor-like set $K\in \mathcal{C}(I,\{c_{k}\}_{k},\{n_{k}\}_{k},%
\{a_{k}\}_{k}),$ using the approach in \cite{Lv}, it is clear that $K$ is
homogeneous with a scale function
\begin{equation*}
h(r)=\frac{\log n_{1}\cdots n_{k}}{-\log c_{1}\cdots c_{k}}\quad \text{ for }%
c_{1}\cdots c_{k}|I|<r\leq c_{1}\cdots c_{k-1}|I|.
\end{equation*}%
Therefore Theorem 2 follows immediately from Theorem 3.

\begin{remark}
Using the result in \cite{Lv}, for every Cantor-like set $K$ as above, we
have
\begin{equation*}
\dim _{H}K=\underline{\lim }_{k\rightarrow \infty }\frac{\log n_{1}\cdots
n_{k}}{-\log c_{1}\cdots c_{k}} , \quad \dim _{P}K=\overline{\lim }%
_{k\rightarrow \infty }\frac{\log n_{1}\cdots n_{k}}{-\log c_{1}\cdots c_{k}}%
.
\end{equation*}
\end{remark}

For the rest of the paper, we will prove Theorem 1 and Theorem 3 in Section
2 and Section 3 respectively.

\bigskip

\section{Assouad Dimension of Moran Set}

Suppose that $\mathcal{M}=\mathcal{M}(J,\{n_{k}\},\{\phi _{k}\})$ where $%
\phi _{k}=(c_{k,1},c_{k,2},\cdots ,c_{k,n_{k}})$, $k=1,2,\cdots.$ Without
loss of generality, we assume that $|J|=1.$

For each word $\mathbf{u}=u_{1}u_{2}\cdots u_{k}\in D_{k}$, we write $|%
\mathbf{u}|(=k) $ for the length of $\mathbf{u}$. Given $k,k^{\prime }\in
\mathbb{N}$, we write
\begin{equation*}
D_{k,k^{\prime }}= \{\mathbf{v}=v_{k}\cdots v_{k^{\prime }}:1\leq v_{j}\leq
n_{j}\text{ for }k\leq j\leq k^{\prime }\},
\end{equation*}%
for $k\leq k^{\prime}$, otherwise, $D_{k,k^{\prime }}=\{\emptyset \}$. Note
that $D_{1,k}=D_{k}$. For $\mathbf{v}=v_{k}\cdots v_{k^{\prime }}\in
D_{k,k^{\prime }},$ we write
\begin{equation*}
c_{\mathbf{v}}=c_{k,v_{k}}\cdots c_{k^{\prime },v_{k^{\prime }}},
\end{equation*}%
with $c_{\mathbf{\emptyset }}=1$. For $\mathbf{u}=u_{1}u_{2}\cdots
u_{k-1}\in D_{k-1}$ and $\mathbf{v}=v_{k}v_{k+1}\cdots v_{k^{\prime }}\in
D_{k,k^{\prime }}$, we write
\begin{equation*}
\mathbf{u}\ast \mathbf{v}=u_{1}u_{2}\cdots u_{k-1}v_{k}v_{k+1}\cdots
v_{k^{\prime }}\in D_{k^{\prime }}.
\end{equation*}
For $\mathbf{v}\in D_{k,k^{\prime }}$, we denote by $\mathbf{v}^{-}$ the
word obtained by deleting the last letter of $\mathbf{v}$. Note that $%
\mathbf{v}^{-}=\emptyset $ (the empty word) if $k=k^{\prime }-1$. Given $%
\mathbf{u}\in D$, for $0<\delta <c_{\ast }$, we write
\begin{equation}
\mathcal{A}_{\mathbf{u}}(\delta )=\{\mathbf{u}\ast \mathbf{v}\in D:\text{ }%
c_{\mathbf{v}}\leq \delta <c_{\mathbf{v}^{-}}\}.  \label{2.1}
\end{equation}%
For $\mathbf{u}=\emptyset ,$ we write $\mathcal{A}(\delta )$ for $\mathcal{A}%
_{\emptyset }(\delta ).$

Let $\Lambda =\{u_{1}u_{2}\cdots u_{k}\cdots :u_{1}u_{2}\cdots u_{k}\in
D_{k} $ for all $k\}$ be the symbolic system composed of infinite words.
Given a word $\mathbf{i=}i_{1}\cdots i_{n}\in D_{n},$ we call
\begin{equation*}
\lbrack \mathbf{i}]=\{u_{1}\cdots u_{n}\cdots \in \Lambda :u_{1}\cdots
u_{n}=i_{1}\cdots i_{n}\}
\end{equation*}
the \textit{cylinder} with respect to the word $\mathbf{i}.$

\begin{lemma}
\label{lem1} Given $\mathbf{u}\in D,$ we have
\begin{equation}
1=\sum\limits_{\mathbf{u}\ast \mathbf{v}\in \mathcal{A}_{\mathbf{u}}(\delta
)}\frac{(c_{\mathbf{v}})^{s}}{\prod_{p=|\mathbf{u}|+1}^{|\mathbf{u}|+|%
\mathbf{v}|}\sum\limits_{q=1}^{n_{p}}c_{p,q}^{s}}.  \label{xixi}
\end{equation}
\end{lemma}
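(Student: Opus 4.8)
The plan is to read the summand in \eqref{xixi} as a product of normalized weights running along a single branch of the construction tree, and then to prove the identity by exhibiting $\mathcal{A}_{\mathbf{u}}(\delta)$ as a finite complete section of that tree across which total mass is conserved. Fix the real exponent $s$ and, for each level $p$, set
\[
\pi_{p,q}=\frac{c_{p,q}^{s}}{\sum_{q'=1}^{n_{p}}c_{p,q'}^{s}},\qquad 1\leq q\leq n_{p},
\]
so that $\sum_{q=1}^{n_{p}}\pi_{p,q}=1$; the denominators are positive because $c_{p,q}>0$, so this is legitimate for every real $s$. Writing $k=|\mathbf{u}|+1$ and $\mathbf{v}=v_{k}\cdots v_{k'}$ with $k'=|\mathbf{u}|+|\mathbf{v}|$, the summand factorizes as
\[
\frac{(c_{\mathbf{v}})^{s}}{\prod_{p=|\mathbf{u}|+1}^{|\mathbf{u}|+|\mathbf{v}|}\sum_{q=1}^{n_{p}}c_{p,q}^{s}}=\prod_{p=k}^{k'}\frac{c_{p,v_{p}}^{s}}{\sum_{q=1}^{n_{p}}c_{p,q}^{s}}=\prod_{p=k}^{k'}\pi_{p,v_{p}},
\]
so \eqref{xixi} is equivalent to the assertion that the total weight $\sum_{\mathbf{u}\ast\mathbf{v}\in\mathcal{A}_{\mathbf{u}}(\delta)}\prod_{p=k}^{k'}\pi_{p,v_{p}}$ equals $1$.

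Next I would establish that $\mathcal{A}_{\mathbf{u}}(\delta)$ is a finite complete antichain below $\mathbf{u}$. Since $n_{k}\geq 2$ and $\sum_{j}c_{k,j}^{d}\leq 1$ with $c_{\ast}=\inf_{i,j}c_{i,j}>0$, each ratio obeys $c_{k,j}^{d}\leq 1-(n_{k}-1)c_{\ast}^{d}\leq 1-c_{\ast}^{d}$, so every $c_{k,j}$ is bounded above by $\bar{c}:=(1-c_{\ast}^{d})^{1/d}<1$. Hence along any infinite branch $v_{k}v_{k+1}\cdots$ the partial products $c_{v_{k}\cdots v_{m}}$ decrease strictly from $c_{\emptyset}=1>\delta$ down to $0$, so there is exactly one index $k'$ with $c_{v_{k}\cdots v_{k'}}\leq\delta<c_{v_{k}\cdots v_{k'-1}}$; that is, each branch meets $\mathcal{A}_{\mathbf{u}}(\delta)$ in precisely one word. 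The estimate $\bar{c}^{\,m}\to 0$ also bounds the length of every admissible $\mathbf{v}$, and since each $n_{k}$ is finite, the truncated tree down to this section is finite, whence $\mathcal{A}_{\mathbf{u}}(\delta)$ is a finite set.

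Finally I would sum the weights by conservation of mass. Assigning to each suffix word $\mathbf{w}$ the weight $W(\mathbf{w})=\prod_{p}\pi_{p,w_{p}}$ with $W(\emptyset)=1$, the normalization $\sum_{q}\pi_{p,q}=1$ yields $\sum_{i}W(\mathbf{w}i)=W(\mathbf{w})$ whenever one adjoins the children at a single level. Starting from the root $\mathbf{u}$ with mass $W(\emptyset)=1$ and repeatedly replacing each interior node (one with $c_{\mathbf{w}}>\delta$) by its children, a process that terminates after finitely many steps by the previous paragraph and whose leaves are exactly the words of $\mathcal{A}_{\mathbf{u}}(\delta)$, the total mass is preserved at every step; therefore $\sum_{\mathbf{u}\ast\mathbf{v}\in\mathcal{A}_{\mathbf{u}}(\delta)}W(\mathbf{v})=1$, which is \eqref{xixi}. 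Equivalently, one may introduce the product measure on $\Lambda$ with level marginals $(\pi_{p,q})_{q}$ and observe that the cylinders $\{[\mathbf{u}\ast\mathbf{v}]:\mathbf{u}\ast\mathbf{v}\in\mathcal{A}_{\mathbf{u}}(\delta)\}$ partition $[\mathbf{u}]$. I expect the only genuinely delicate point to be the antichain-and-finiteness claim of the second step; once the uniform bound $\bar{c}<1$ on the ratios is secured, the remainder is the bookkeeping of a conserved mass and is routine.
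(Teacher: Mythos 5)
Your proof is correct and is essentially the paper's argument: the paper likewise normalizes the weights into a product probability measure on cylinders and uses the fact that $\{[\mathbf{u}\ast\mathbf{v}]:\mathbf{u}\ast\mathbf{v}\in\mathcal{A}_{\mathbf{u}}(\delta)\}$ is a disjoint cover of $[\mathbf{u}]$, which is exactly your conservation-of-mass step (and your closing remark states this equivalence explicitly). The only difference is that you spell out the finiteness and completeness of the section $\mathcal{A}_{\mathbf{u}}(\delta)$, which the paper leaves implicit.
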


\begin{proof}
Fix $\mathbf{u}\in D,$ we have a probability measure $\mu $ supported on $[%
\mathbf{u}]$ such that
\begin{equation*}
\mu ([\mathbf{u}\ast \mathbf{v}])=\frac{(c_{\mathbf{v}})^{s}}{\prod_{p=|%
\mathbf{u}|+1}^{|\mathbf{u}|+|\mathbf{v}|}\sum%
\limits_{q=1}^{n_{p}}c_{p,q}^{s}}\text{ for all }\mathbf{u}\ast \mathbf{v}%
\in D.
\end{equation*}%
Since $[\mathbf{u}]=\bigcup\limits_{\mathbf{u}\ast \mathbf{v}\in \mathcal{A}%
_{\mathbf{u}}(\delta )}[\mathbf{u}\ast \mathbf{v}]$ is a disjoint union, we
obtain
\begin{equation*}
1=\sum\limits_{\mathbf{u}\ast \mathbf{v}\in \mathcal{A}_{\mathbf{u}}(\delta
)}\mu ([\mathbf{u}\ast \mathbf{v}])=\sum\limits_{\mathbf{u}\ast \mathbf{v}%
\in \mathcal{A}_{\mathbf{u}}(\delta )}\frac{(c_{\mathbf{v}})^{s}}{\prod_{p=|%
\mathbf{u}|+1}^{|\mathbf{u}|+|\mathbf{v}|}\sum%
\limits_{q=1}^{n_{p}}c_{p,q}^{s}}.
\end{equation*}
\end{proof}

The following lemma can be obtained directly by using Lemma 9.2 in \cite%
{Falconer}.

\begin{lemma}
\label{lem2} \label{lemma1} Suppose $c_{\ast }>0$. Then there exits a
positive integer $l$ such that for all $0<\delta <c_{\ast },$ $\mathbf{u}\in
D$ and $x\in E\cap J_{\mathbf{u}},$ we have
\begin{equation*}
\sharp \{\mathbf{u}\ast \mathbf{v}\in \mathcal{A}_{\mathbf{u}}(\delta )\text{
}|\text{ }B(x,c_{\mathbf{u}}\delta )\cap J_{\mathbf{u}\ast \mathbf{v}}\neq
\varnothing \}\leq l.
\end{equation*}%
In particular, if $\mathbf{u}$ is the empty word, we have
\begin{equation*}
\sharp \{\mathbf{v}\in \mathcal{A}(\delta )\text{ }|\text{ }B(x,\delta )\cap
J_{\mathbf{v}}\neq \varnothing \}\leq l.
\end{equation*}
\end{lemma}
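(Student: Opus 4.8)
The plan is to reduce the statement to the volume-packing estimate of Lemma~9.2 in \cite{Falconer}. The feature of the Moran construction that makes this work is that every basic set is a similar copy $J_{\mathbf{w}}=S_{\mathbf{w}}(J)$ of the \emph{single} compact set $J$, with similarity ratio equal to its diameter $c_{\mathbf{w}}$ (recall $|J|=1$). Since $\mathrm{int}(J)\neq\varnothing$, the set $J$ contains an open ball of some fixed radius $a_{1}>0$; applying $S_{\mathbf{w}}$ shows that $\mathrm{int}(J_{\mathbf{w}})$ contains a ball of radius $a_{1}c_{\mathbf{w}}$, while $J_{\mathbf{w}}$ itself has diameter $c_{\mathbf{w}}$. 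Thus all basic sets are uniformly ``round'': inscribed radius and diameter are comparable with a constant depending only on $J$.

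First I would control the sizes of the sets indexed by $\mathcal{A}_{\mathbf{u}}(\delta)$. By the defining relation $c_{\mathbf{v}}\le\delta<c_{\mathbf{v}^{-}}$ in (\ref{2.1}) and the fact that deleting the last letter multiplies the product by a factor at least $c_{\ast}$, one gets $c_{\ast}\delta<c_{\mathbf{v}}\le\delta$. Hence, for $\mathbf{u}\ast\mathbf{v}\in\mathcal{A}_{\mathbf{u}}(\delta)$, the diameter $|J_{\mathbf{u}\ast\mathbf{v}}|=c_{\mathbf{u}}c_{\mathbf{v}}$ satisfies $c_{\mathbf{u}}c_{\ast}\delta<|J_{\mathbf{u}\ast\mathbf{v}}|\le c_{\mathbf{u}}\delta$, and $\mathrm{int}(J_{\mathbf{u}\ast\mathbf{v}})$ contains a ball of radius $a_{1}c_{\mathbf{u}}c_{\ast}\delta$. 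I would also record that these sets have pairwise disjoint interiors: the words in $\mathcal{A}_{\mathbf{u}}(\delta)$ form an antichain, since if $\mathbf{u}\ast\mathbf{v}_{1}$ were a prefix of $\mathbf{u}\ast\mathbf{v}_{2}$ then $\mathbf{v}_{1}$ would be a prefix of $\mathbf{v}_{2}^{-}$, giving $c_{\mathbf{v}_{1}}\ge c_{\mathbf{v}_{2}^{-}}>\delta$, which contradicts $c_{\mathbf{v}_{1}}\le\delta$; disjointness of interiors along an antichain then follows from the Moran structure condition~(2).

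Now write $\rho=c_{\mathbf{u}}\delta$, so that $B(x,c_{\mathbf{u}}\delta)=B(x,\rho)$. If $J_{\mathbf{u}\ast\mathbf{v}}$ meets $B(x,\rho)$, then, since its diameter is at most $\rho$, it is contained in $B(x,2\rho)$. Consequently the disjoint balls of radius $a_{1}c_{\ast}\rho$ inscribed in the interiors of all such $J_{\mathbf{u}\ast\mathbf{v}}$ lie inside $B(x,2\rho)$, and comparing $d$-dimensional Lebesgue measures bounds their number by $(2/(a_{1}c_{\ast}))^{d}$. This is exactly the conclusion of Lemma~9.2 in \cite{Falconer}, and the bound $l=\lceil(2/(a_{1}c_{\ast}))^{d}\rceil$ depends only on $J$ and $c_{\ast}$, not on $\delta$, $\mathbf{u}$ or $x$. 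The ``in particular'' clause is the special case $\mathbf{u}=\emptyset$, where $c_{\emptyset}=1$ and $\mathcal{A}_{\emptyset}(\delta)=\mathcal{A}(\delta)$.

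The only delicate point is the bookkeeping that keeps the counting constant uniform: one must check that the inscribed radius $a_{1}c_{\mathbf{u}\ast\mathbf{v}}$ and the diameter $c_{\mathbf{u}\ast\mathbf{v}}$ both carry the common factor $c_{\mathbf{u}}$, so that after rescaling by $\rho=c_{\mathbf{u}}\delta$ only $a_{1}$ and $c_{\ast}$ remain. Everything else is the routine volume comparison underlying \cite{Falconer}.
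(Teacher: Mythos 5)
Your proof is correct and follows exactly the route the paper intends: the paper simply states that the lemma ``can be obtained directly by using Lemma 9.2 in \cite{Falconer}'', and your argument is precisely the standard application of that volume-packing lemma, with the relevant verifications (the bound $c_{\ast}\delta<c_{\mathbf{v}}\le\delta$, the antichain/disjoint-interior property, and the uniform inscribed-ball radius coming from $\mathrm{int}(J)\neq\varnothing$) correctly supplied.
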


\medskip

\begin{lemma}
Suppose $c_{\ast }>0$. Let $s_{k,k+m}$ be defined by $(\ref{eqns})$. Then
the sequence $\{\sup_{k}s_{k,k+m}\}_{m=1}^\infty$ is convergent.
\end{lemma}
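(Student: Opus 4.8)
The plan is to reduce the statement to an abstract fact about the sequence $a_m:=\sup_k s_{k,k+m}$, namely that it is bounded, obeys a max-subadditivity inequality, and has consecutive differences tending to $0$ at rate $1/m$; together these force convergence. Write $g_i(s)=\sum_{j=1}^{n_i}c_{i,j}^s$, so that $\Delta_{k,k+m}(s)=\prod_{i=k+1}^{k+m}g_i(s)$ and $s_{k,k+m}$ is its unique root (each $g_i$ is strictly decreasing with $g_i(0)=n_i\ge2$). First I would record the uniform bounds coming from $c_*>0$: since $\sum_j c_{i,j}^d\le1$ and $c_{i,j}\ge c_*$, one gets $n_i\le c_*^{-d}=:N$ and $c_{i,j}\le(1-c_*^d)^{1/d}=:c^*<1$, so $c_*\le c_{i,j}\le c^*$. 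Feeding $2(c_*)^s\le g_i(s)\le N(c^*)^s$ into $\Delta_{k,k+m}(s_{k,k+m})=1$ yields $\sigma:=\tfrac{\log2}{\log(1/c_*)}\le s_{k,k+m}\le\tfrac{\log N}{\log(1/c^*)}=:S$, so $\{a_m\}\subset[\sigma,S]$ is bounded.

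Next I would prove max-subadditivity. From $\Delta_{k,k+m+n}=\Delta_{k,k+m}\cdot\Delta_{k+m,k+m+n}$ and the monotonicity of each factor, evaluating at $t=\max(s_{k,k+m},s_{k+m,k+m+n})$ gives $\Delta_{k,k+m+n}(t)\le1$, hence $s_{k,k+m+n}\le t$; taking suprema over $k$ gives $a_{m+n}\le\max(a_m,a_n)$, and by induction $a_{kp}\le a_p$ for every $k\ge1$. Set $L:=\inf_m a_m\,(\ge\sigma>0)$; then $a_m\ge L$ for all $m$, and for any $\epsilon>0$ there is a ``good base'' $p$ with $a_p<L+\epsilon$, all of whose multiples satisfy $a_{kp}\le a_p<L+\epsilon$.

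The crux, and the main obstacle, is a quantitative regularity estimate; max-subadditivity alone is insufficient (a sequence alternating between two values satisfies it yet diverges). Passing to $f_i=\log g_i$, the uniform bounds give $f_i'(s)\le\log c^*=:-a<0$ (as $f_i'$ is a convex combination of the $\log c_{i,j}\le\log c^*$) and $|f_i(s)|\le M$ for $s\in[\sigma,S]$, with $a,M$ depending only on $c_*,c^*,N,S$. Adding one factor, $F_{k,k+m+1}(s):=\sum_{i=k+1}^{k+m+1}f_i(s)$ satisfies $F_{k,k+m+1}(s_{k,k+m})=f_{k+m+1}(s_{k,k+m})$, of absolute value at most $M$, while its derivative is at most $-(m+1)a$; the mean value theorem then gives $|s_{k,k+m+1}-s_{k,k+m}|\le M/((m+1)a)$ uniformly in $k$, hence $|a_{m+1}-a_m|\le C/(m+1)$ with $C:=M/a$.

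Finally I would combine the two properties. Fix the good base $p$ with $a_p<L+\epsilon$. For $m>p$ put $k=\lfloor m/p\rfloor\ge1$, so $kp\le m<(k+1)p$ and $a_{kp}<L+\epsilon$; telescoping the Lipschitz bound over the block $[kp,m]$ gives $a_m\le a_{kp}+\sum_{j=kp}^{m-1}\tfrac{C}{j+1}\le L+\epsilon+C\log\!\big(1+\tfrac1k\big)$. Letting $m\to\infty$ forces $k\to\infty$, so $\limsup_m a_m\le L+\epsilon$; as $\epsilon$ is arbitrary, $\limsup_m a_m\le L$, and since $a_m\ge L$ for all $m$ we conclude $\lim_m a_m=L$ exists. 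The only delicate point is the rate $1/m$ in the Lipschitz estimate: it is precisely what lets a single good base, whose multiples are spaced a fixed distance $p$ apart, control every intermediate index, because the relative gap $1/k$ shrinks to zero.
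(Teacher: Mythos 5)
Your proof is correct, but it reaches the conclusion by a genuinely different mechanism than the paper's. Both arguments ultimately show that $\theta_m:=\sup_k s_{k,k+m}$ converges to $\inf_m\theta_m$, and both exploit the same block decomposition $\Delta_{k,k+m+n}(s)=\Delta_{k,k+m}(s)\cdot\Delta_{k+m,k+m+n}(s)$ (your max-subadditivity $\theta_{m+n}\le\max(\theta_m,\theta_n)$ is exactly this splitting evaluated at the larger root). The difference lies in how indices that are not multiples of a fixed $m$ are controlled. The paper fixes $m$, takes $s>\theta_m$, writes an arbitrary length as $pm+n$ with $0\le n<m$, and estimates the product directly: each of the $p$ blocks of length $m$ contributes a factor at most $(1-c_*^d)^{(s-\theta_m)/d}<1$, while the leftover block of length $n<m$ contributes at most $(c_*)^{-nd}$, a constant; for large $p$ the product drops below $1$, so $s_{t,t+pm+n}\le s$ and hence $\overline{\lim}_{M}\theta_M\le\theta_m$ for every $m$. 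You instead prove the quantitative equicontinuity estimate $|\theta_{m+1}-\theta_m|\le C/(m+1)$ by applying the mean value theorem to $\log\Delta$, and use it to interpolate from the multiples $kp$ (where max-subadditivity gives $\theta_{kp}\le\theta_p$) to all large indices; your observation that max-subadditivity alone is insufficient is well taken, and the $O(1/m)$ rate is precisely what closes that gap. Your route is more modular and isolates a reusable Lipschitz bound on $m\mapsto s_{k,k+m}$, while the paper's is shorter because the geometric decay in $p$ absorbs the bounded remainder in a single computation. All your supporting estimates ($n_i\le c_*^{-d}$, $c_{i,j}\le(1-c_*^d)^{1/d}$, the uniform bounds $\sigma\le s_{k,k'}\le S$, and the telescoping step) check out.
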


\begin{proof}
Suppose $E\subset \mathbb{R}^{d}.$ We denote by $\mathcal{L}$ the Lebesgue
measure on $\mathbb{R}^{d}.$ Recall that $c_{\ast }>0$ and $n_{k}\geq 2$.
Since for each $\mathbf{u}\in D_{k-1},$
\begin{equation*}
\text{int}(J_{\mathbf{u}\ast i})\cap \text{int}(J_{\mathbf{u}\ast
j})=\varnothing ,
\end{equation*}
for all $i\neq j\leq n_{k}$, we have%
\begin{equation*}
\sum\nolimits_{i=1}^{n_{k}}\mathcal{L}(\text{int(}J_{\mathbf{u}\ast i}\text{%
))}\leq \mathcal{L}(\text{int(}J_{\mathbf{u}}\text{)),}
\end{equation*}%
that is, $\sum\nolimits_{i=1}^{n_{k}}c_{k,i}^{d}\leq 1$. It implies%
\begin{equation}
\max_{i}c_{k,i}\leq (1-c_{\ast }^{d})^{1/d}\text{ and }\sup_{k}n_{k}\leq
(c_{\ast })^{-d}.  \label{aa}
\end{equation}

For every $m$, we write
\begin{equation*}
\theta _{m}=\sup_{k}s_{k,k+m}.
\end{equation*}%
Fix an integer $m\in \mathbb{N}$. Let $s>\theta _{m}$. For each $n\in
\mathbb{Z}\cap \lbrack 0,m-1]$, we have
\begin{equation*}
\Delta _{t,t+pm+n}(s)=\left( \prod\nolimits_{i=0}^{p-1}\Delta
_{t+im,t+(i+1)m}(s)\right) \cdot \Delta _{t+pm,t+pm+n}(s).
\end{equation*}%
Hence, by (\ref{aa}), $\Delta _{t+pm,t+pm+n}(s)\leq (\sup_{k}n_{k})^{n}\leq
(c_{\ast })^{-nd}$ and
\begin{eqnarray*}
\Delta _{t+im,t+(i+1)m}(s) &\leq &\Delta _{t+im,t+(i+1)m}(\theta _{m})\cdot
(\sup_{k,i}c_{k,i})^{s-\theta _{m}} \\
&\leq &1\cdot (1-c_{\ast }^{d})^{(s-\theta _{m})/d} \\
&=&(1-c_{\ast }^{d})^{(s-\theta _{m})/d}.
\end{eqnarray*}%
Therefore, for all $t\in \mathbb{N}$, we have
\begin{equation*}
\Delta _{t,t+pm+n}(s)\leq (1-c_{\ast }^{d})^{p(s-\theta _{m})/d}(c_{\ast
})^{-nd}
\end{equation*}%
which means there exists an integer $p_{0}(s)$ such that for all $p\geq
p_{0}(s),$
\begin{equation*}
\Delta _{t,t+pm+n}(s)\leq 1,
\end{equation*}%
that is,
\begin{equation*}
s_{t,t+pm+n}\leq s,
\end{equation*}%
for all $p\geq p_{0}(s)$ and $t\geq 0$. Hence
\begin{equation*}
\overline{\lim }_{p\rightarrow \infty }\theta _{pm+n}=\overline{\lim }%
_{p\rightarrow \infty }\sup\nolimits_{t}s_{t,t+pm+n}\leq s.
\end{equation*}%
Since it holds for all $s>\theta _{m}$, we obtain $\overline{\lim }%
_{p\rightarrow \infty }\theta _{pm+n}\leq \theta _{m}$. Thus%
\begin{equation*}
\overline{\lim }_{p\rightarrow \infty }\theta _{pm+n}\leq \inf_{m}\theta
_{m}\leq \underline{\lim }_{m\rightarrow \infty }\theta _{m},
\end{equation*}%
which implies that $\lim_{m\rightarrow \infty }\theta _{m}$ exists.$\ $
\end{proof}

\medskip

\begin{proof}[Proof of Theorem 1]
$\ $

We first prove that $s^{\ast \ast }$ is an upper bound of $\dim _{A}E$. It
suffices to verify that the inequality $\dim _{A}E\leq s$ holds for all $%
s>s^{\ast \ast }$.

Since $s>\lim_{m\rightarrow \infty}(\sup_{k}s_{k,k+m})$, there exists a
positive integer $N$ such that, for all $m>N$, we have $s>s_{k,k+m}$.
Therefore, for all $m>N$
\begin{equation}
\prod\limits_{i=k+1}^{k+m}\left( \sum\limits_{j=1}^{n_{i}}c_{i,j}^{s}\right)
\leq \prod\limits_{i=k+1}^{k+m}\left(
\sum\limits_{j=1}^{n_{i}}c_{i,j}^{s_{k,k+m}}\right) =1.  \label{2.2}
\end{equation}

Fix a word $\mathbf{i}\in D$ and $\delta \in (0,c_{\mathbf{i}^{-}}).$ The
fact that $c_{\ast }>0$ implies that the sequence $\{n_{k}\}$ is bounded,
say $\varpi >1,$ that is, $n_{k}\leq \varpi, k=1,2,\cdots.$ Thus, for all $%
0<m\leq N$,
\begin{equation}
\prod\limits_{i=k+1}^{k+m}\left( \sum\limits_{j=1}^{n_{i}}c_{i,j}^{s}\right)
\leq \varpi ^{N}.  \label{ha}
\end{equation}%
By (\ref{2.2}) and (\ref{ha}), we have, for all $\mathbf{j}\in D_p$,
\begin{equation}
\prod_{p=|\mathbf{i}|+1}^{|\mathbf{i}|+|\mathbf{j}|}\sum%
\limits_{q=1}^{n_{p}}c_{p,q}^{s}\leq \varpi ^{N}.  \label{we}
\end{equation}
Combining Lemma~\ref{lem1} with (\ref{we}), we have
\begin{align*}
1& =\sum\limits_{\mathbf{i}\ast \mathbf{j}\in \mathcal{A}_{\mathbf{i}%
}(\delta )}\frac{(c_{\mathbf{j}})^{s}}{\prod_{p=|\mathbf{i}|+1}^{|\mathbf{i}%
|+|\mathbf{j}|}\sum\limits_{q=1}^{n_{p}}c_{p,q}^{s}} \\
& \geq \varpi ^{-N}\sum\limits_{\mathbf{i}\ast \mathbf{j}\in \mathcal{A}_{%
\mathbf{i}}(\delta )}(c_{\mathbf{j}})^{s} \\
& \geq \varpi ^{-N}\sum\limits_{\mathbf{i}\ast \mathbf{j}\in \mathcal{A}_{%
\mathbf{i}}(\delta )}(c_{\ast }c_{\mathbf{j}^{-}})^{s} \\
& \geq (\varpi ^{-N}c_{\ast }^{s})\cdot \delta ^{s}\cdot \sharp \mathcal{A}_{%
\mathbf{i}}(\delta ).
\end{align*}%
It follows that
\begin{equation}
\sharp \mathcal{A}_{\mathbf{i}}(\delta )\leq \frac{\varpi ^{N}}{c_{\ast
}^{s}\delta ^{s}}  \label{2.3}
\end{equation}%
for all $\mathbf{i}\in D$ and all $0<\delta <c_{\ast }$.

Fix a point $x\in E$ and $r,R$ with $0<r<R$. Since $E$ is doubling, without
loss of generality, we may assume that
\begin{equation*}
0<r<c_{\ast }R<R<c_{\ast }.
\end{equation*}%
It is clear that
\begin{equation}
B(x,R)\cap E\subset \bigcup\limits_{\mathbf{i}\in \mathcal{A}(R),B(x,R)\cap
J_{\mathbf{i}}\neq \varnothing }J_{\mathbf{i}}\cap E.  \label{2.4}
\end{equation}
For each $\mathbf{i}\in \mathcal{A}(R)$ with $B(x,R)\cap J_{\mathbf{i}}\neq
\varnothing $, we have
\begin{equation*}
J_{\mathbf{i}}\cap E\subseteq \bigcup\limits_{\mathbf{i}\ast \mathbf{j}\in
\mathcal{A}_{\mathbf{i}}(r/R)}J_{\mathbf{i}\ast \mathbf{j}}.
\end{equation*}
Now taking $x_{\mathbf{i},\mathbf{j}}\in J_{\mathbf{i}\ast \mathbf{j}}\cap E$%
, we have
\begin{equation*}
J_{\mathbf{i}\ast \mathbf{j}}\subseteq B(x_{\mathbf{i},\mathbf{j}},r),
\end{equation*}
due to $c_{\mathbf{i}\ast \mathbf{j}}=c_{\mathbf{i}}c_{\mathbf{j}}\leq
R\cdot r/R=r$ and $|J|=1.$

Thus by (\ref{2.4}), we obtain that
\begin{equation}
B(x,R)\cap E\subset \bigcup\limits_{\substack{ \mathbf{i}\in \mathcal{A}(R)
\\ B(x,R)\cap J_{\mathbf{i}}\neq \varnothing }}\bigcup\limits_{\mathbf{i}%
\ast \mathbf{j}\in \mathcal{A}_{\mathbf{i}}(r/R)}B(x_{\mathbf{i},\mathbf{j}%
},r).  \label{2.6}
\end{equation}%
By (\ref{2.3}) and Lemma \ref{lem2}, we have
\begin{align*}
N_{r,R}(E)& \leq \sum\limits_{\substack{ \mathbf{i}\in \mathcal{A}(R)  \\ %
B(x,R)\cap J_{\mathbf{i}}\neq \varnothing }}\sharp \mathcal{A}_{\mathbf{i}%
}(r/R) \\
& \leq \sum\limits_{\substack{ \mathbf{i}\in \mathcal{A}(R)  \\ B(x,R)\cap
J_{\mathbf{i}}\neq \varnothing }}\frac{\varpi ^{N}}{c_{\ast }^{s}}\left(
\frac{R}{r}\right) ^{s} \\
& \leq \frac{\varpi ^{N}}{c_{\ast }^{s}}\left( \frac{R}{r}\right) ^{s}\cdot
\sharp \{\mathbf{i}\in \mathcal{A}(R):B(x,R)\cap J_{\mathbf{i}}\neq
\varnothing \} \\
& \leq \frac{l\varpi ^{N}}{c_{\ast }^{s}}\left( \frac{R}{r}\right) ^{s}.
\end{align*}%
Hence $s$ is an upper bound, and the arbitrariness implies that
\begin{equation*}
\dim _{A}E\leq s^{\ast \ast }.
\end{equation*}

\medskip

For the rest of the proof, we will verify that $s^{\ast \ast }$ is also a
lower bound.

Since $s^{\ast \ast }$ is the limit of $\{\sup_{k}s_{k,k+m}\}$, there exists
a sequence $\{(m_{k},m_{k}^{\prime })\}_{k=1}^{\infty }$ of integer pairs
with $(m_{k}^{\prime }-m_{k})$ tending to $\infty$ such that
\begin{equation*}
\lim_{k\rightarrow \infty }s_{m_{k},m_{k}^{\prime }}=s^{\ast \ast }.
\end{equation*}

Arbitrarily choose $s<s^{\ast \ast }$. Without loss of generality, we assume
that, for all $k\in \mathbb{N}$.
\begin{equation*}
s_{m_{k},m_{k}^{\prime }}>s.
\end{equation*}%
Hence, it is clear that
\begin{equation*}
\Delta _{m_{k},m_{k}^{\prime }}(s)>\Delta _{m_{k},m_{k}^{\prime
}}(s_{m_{k},m_{k}^{\prime }})=1.
\end{equation*}%
Fix an integer $k$, we have
\begin{equation}
\sum\limits_{\mathbf{j}\in D_{m_{k}+1,m_{k}^{\prime }}}c_{\mathbf{j}%
}^{s}=\Delta _{m_{k},m_{k}^{\prime }}(s)>1.  \label{2.7}
\end{equation}%
For each $p\in \mathbb{N}\cup \{0\}$, let
\begin{equation}
\mathcal{B}_{p,k}=\{\mathbf{j}\in D_{m_{k}+1,m_{k}^{\prime }}:2^{-p-1}<c_{%
\mathbf{j}}\leq 2^{-p}\},  \label{test}
\end{equation}%
and we write
\begin{equation*}
p_{k}=\min \{p:\mathcal{B}_{p,k}\neq \varnothing \}.
\end{equation*}%
Since
\begin{equation*}
2^{-p-1}\leq c_{\mathbf{j}}\leq (1-c_{\ast }^{d})^{(m_{k}^{\prime
}-m_{k})/d},
\end{equation*}%
it is obvious that the sequence $\{p_k\}$ tends to infinity, that is,
\begin{equation*}
\lim_{k}p_{k}=\infty ,
\end{equation*}%
Thus by (\ref{2.7}) and (\ref{test}), we obtain that
\begin{equation}
\sum\limits_{p=0}^{\infty }\sharp \mathcal{B}_{p,k}2^{-ps}>1.  \label{fff}
\end{equation}
Hence, for any $\varepsilon >0$, there exists an integer $q_{k}(\geq p_{k})$
such that
\begin{equation}
2^{-\varepsilon q_{k}}(1-2^{-\varepsilon })\leq \sharp \mathcal{B}%
_{q_{k},k}(2^{-q_{k}})^{s},  \label{q_k}
\end{equation}%
otherwise
\begin{equation*}
\sum\limits_{p=0}^{\infty }\sharp \mathcal{B}_{p,k}2^{-ps}<\sum%
\limits_{p=0}^{\infty }2^{-\varepsilon p}(1-2^{-\varepsilon })=1,
\end{equation*}%
which contradicts (\ref{fff}). Since $p_{k}$ tends to $\infty $ and $%
q_{k}\geq p_{k},$ we have
\begin{equation*}
\lim_{k}q_{k}=\infty .
\end{equation*}

Given $\mathbf{i}\in D_{m_{k}},$ we take
\begin{equation*}
R_{k}=|J_{\mathbf{i}}|\text{\ and }r_{k}=\min_{\mathbf{j}\in \mathcal{B}%
_{q_{k},k}}|J_{\mathbf{i}\ast \mathbf{j}}|\in \lbrack 2^{-q_{k}-1}|J_{%
\mathbf{i}}|,2^{-q_{k}}|J_{\mathbf{i}}|].
\end{equation*}%
Since $|J_{\mathbf{i}\ast \mathbf{j}}|\in \lbrack 2^{-q_{k}-1}|J_{\mathbf{i}%
}|,2^{-q_{k}}|J_{\mathbf{i}}|]$ for all $\mathbf{j}\in \mathcal{B}%
_{q_{k},k}\ $and int($J_{\mathbf{i}\ast \mathbf{j}}$)$\cap $int($J_{\mathbf{i%
}\ast \mathbf{j}^{\prime }})=\varnothing $ for all $\mathbf{j}\neq \mathbf{j}%
^{\prime }\in \mathcal{B}_{q_{k},k}$, by Lemma~\ref{lem2} again, there
exists a positive integer $l^{\prime }$ independent of $k$ such that each
ball with radius $r_{k}(\in \lbrack 2^{-q_{k}-1}|J_{\mathbf{i}%
}|,2^{-q_{k}}|J_{\mathbf{i}}|])$ intersects at most $l^{\prime }$ elements
in $\{J_{\mathbf{i}\ast \mathbf{j}}\}_{\mathbf{j}\in \mathcal{B}_{q_{k},k}}$.

To prove the lower bound, we need the following inequality
\begin{equation}
\frac{\sharp \mathcal{B}_{q_{k},k}}{l^{\prime }}\leq N_{r_{k},R_{k}}(E).
\label{lll}
\end{equation}
Notice that $J_{\mathbf{i}}\subset B(z,R_{k})$ for all $z\in J_{\mathbf{i}},$
we assume that there exists a smallest number $t$ such that $B(z,R_{k})$ can
be covered by $t$ balls of radius $r_{k},$ e.g.,
\begin{equation*}
B(z,R_{k})\subset B(x_{1},r_{k})\cup \cdots \cup B(x_{t},r_{k}).
\end{equation*}
Notice that $t\leq N_{r_{k},R_{k}}(E)$ and
\begin{equation*}
\bigcup_{\mathbf{j}\in \mathcal{B}_{q_{k},k}}J_{\mathbf{i}\ast \mathbf{j}%
}\subset J_{\mathbf{i}}\subset B(z,R_{k})\subset B(x_{1},r_{k})\cup \cdots
\cup B(x_{t},r_{k}).
\end{equation*}%
Then for any $\mathbf{j}\in \mathcal{B}_{q_{k},k},$ there exists at least a
ball $B(x_{i},r_{k})$ $1\leq i\leq t$ such that $J_{\mathbf{i}\ast \mathbf{j}%
}\cap B(x_{i},r_{k})\neq \varnothing $, that is,%
\begin{equation*}
\mathcal{B}_{q_{k},k}\subset \bigcup\nolimits_{i=1}^{t}\{\mathbf{j}\in
\mathcal{B}_{q_{k},k}:J_{\mathbf{i}\ast \mathbf{j}}\cap B(x_{i},r_{k})\neq
\varnothing \}.
\end{equation*}%
Therefore, we have p
\begin{eqnarray*}
\sharp \mathcal{B}_{q_{k},k} &\leq &\sum\nolimits_{i=1}^{t}\sharp \{\mathbf{j%
}\in \mathcal{B}_{q_{k},k}:J_{\mathbf{i}\ast \mathbf{j}}\cap
B(x_{i},r_{k})\neq \varnothing \} \\
&\leq &t\cdot l^{\prime }\leq N_{r_{k},R_{k}}(E)\cdot l^{\prime },
\end{eqnarray*}%
which completes the proof of inequality (\ref{lll}).

For any $\zeta >0,$ there exists $C_{\zeta }$ such that for any $k,$%
\begin{equation}
N_{r_{k},R_{k}}(E)\leq C_{\zeta }(\frac{R_{k}}{r_{k}})^{\dim _{A}E+\zeta }
\label{assouad}
\end{equation}%
Therefore, using (\ref{q_k}), (\ref{lll})\ and (\ref{assouad}), we have
\begin{eqnarray*}
\frac{2^{q_{k}(s-\varepsilon )}(1-2^{-\varepsilon })}{l^{\prime }}\leq \frac{%
\sharp \mathcal{B}_{q_{k},k}}{l^{\prime }} &\leq &N_{r_{k},R_{k}}(E) \\
&\leq &C_{\zeta }(\frac{R_{k}}{r_{k}})^{\dim _{A}E+\zeta }\leq C_{\zeta
}(2^{q_{k}+1})^{\dim _{A}E+\zeta }.
\end{eqnarray*}%
Since $\lim_{k}q_{k}=\infty$, by letting $k\rightarrow \infty ,$ it gives
\begin{equation*}
\dim _{A}E+\zeta \geq s-\varepsilon.
\end{equation*}%
By taking $\varepsilon \rightarrow 0$ and $\zeta \rightarrow 0,$ we have $%
\dim _{A}E\geq s$ for all $s<s^{\ast \ast },$ and thus $\dim _{A}E\geq
s^{\ast \ast }.$
\end{proof}

\section{Assouad Dimension of Homogeneous Set}

In this section we will prove the dimension formula for homogeneous sets.

\begin{lemma}
\label{qq} Suppose that $K\subset X$ is doubling. Then
\begin{equation}
\dim _{A}K=\lim_{\rho \rightarrow 0}\left( \sup\limits_{R<\varepsilon }\frac{%
\log N_{\rho R,R}(K)}{-\log \rho }\right) ,  \label{tttt}
\end{equation}%
for all $\varepsilon <|K|.$
\end{lemma}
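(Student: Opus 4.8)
The plan is to reduce everything to the single-variable quantity $M(\rho):=\sup_{R<\varepsilon}N_{\rho R,R}(K)$, which is finite for every $\rho\in(0,1)$ because $K$ is doubling, and which is non-increasing in $\rho$ (larger covering balls need fewer). Since $-\log\rho$ is a positive constant in $R$, the supremum in $(\ref{tttt})$ factors as $\sup_{R<\varepsilon}\frac{\log N_{\rho R,R}(K)}{-\log\rho}=\frac{\log M(\rho)}{-\log\rho}=:g(\rho)$, so it suffices to show that $\lim_{\rho\to0}g(\rho)$ exists and equals $\dim_AK$.

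First I would establish existence of the limit. The key point is the submultiplicativity $M(\rho_1\rho_2)\le M(\rho_1)M(\rho_2)$, proved by a two-stage covering: any ball $B(x,R)$ with $R<\varepsilon$ is covered by at most $M(\rho_1)$ balls $B(y,\rho_1R)$ of radius $\rho_1R<\varepsilon$, and each such ball is in turn covered by at most $M(\rho_2)$ balls of radius $\rho_2(\rho_1R)=\rho_1\rho_2R$; taking the supremum over $R<\varepsilon$ gives the claim. Substituting $\rho=e^{-t}$ and setting $\psi(t)=\log M(e^{-t})$ turns this into subadditivity $\psi(t_1+t_2)\le\psi(t_1)+\psi(t_2)$. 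As $\psi$ is monotone (hence measurable) and nonnegative, Fekete's subadditive lemma yields $\lim_{t\to\infty}\psi(t)/t=\inf_{t>0}\psi(t)/t$, so $L:=\lim_{\rho\to0}g(\rho)$ exists.

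For the inequality $\dim_AK\le L$, fix $\alpha>L$. Since $g(\rho)\to L$ there is $\rho_0$ with $M(\rho_0)<\rho_0^{-\alpha}$. Writing a small $\rho$ as $\rho_0^{\,n}\sigma$ with $\sigma\in(\rho_0,1]$ and iterating submultiplicativity gives $M(\rho)\le C_1\,\rho^{-\alpha}$ for all $\rho\in(0,1)$, where $C_1$ absorbs the bounded factor $\sup_{\sigma\in[\rho_0,1)}M(\sigma)$. Hence $N_{r,R}(K)\le M(r/R)\le C_1(R/r)^{\alpha}$ for all $0<r<R<\varepsilon$, so $\alpha$ is an admissible Assouad exponent and $\dim_AK\le\alpha$; letting $\alpha\downarrow L$ gives $\dim_AK\le L$.

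For $L\le\dim_AK$, fix $\alpha>\dim_AK$ with constants $b,c$ such that $N_{r,R}(K)\le c(R/r)^{\alpha}$ for $0<r<R<b$. When $R<b$ this directly gives $N_{\rho R,R}(K)\le c\rho^{-\alpha}$. The delicate case is $R\in[b,\varepsilon)$, where the Assouad estimate does not apply at scale $R$ itself: here I would first cover $B(x,R)$ by a number of balls of radius $b/2$ bounded uniformly (in $R<\varepsilon$) by the doubling constant, say $M_0$, and then cover each of those---for $\rho<b/(2\varepsilon)$, so that $\rho R<b/2$---by at most $c\,(b/(2\rho R))^{\alpha}$ balls of radius $\rho R$; since $R\ge b$ forces $b/(2R)\le1/2$, this yields $N_{\rho R,R}(K)\le C_2\rho^{-\alpha}$. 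Taking the supremum over $R<\varepsilon$ gives $M(\rho)\le C_2\rho^{-\alpha}$ for small $\rho$, whence $g(\rho)\le\alpha+\frac{\log C_2}{-\log\rho}\to\alpha$ and $L\le\alpha$; letting $\alpha\downarrow\dim_AK$ finishes. I expect this range-matching step---reconciling the supremum over all $R<\varepsilon$ with an Assouad bound only guaranteed for $R<b$---to be the main technical obstacle, the rest being the clean submultiplicative/Fekete machinery.
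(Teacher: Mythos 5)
Your proposal is correct and follows essentially the same route as the paper: both arguments rest on the submultiplicativity $N_{r_1,r_3}(K)\le N_{r_1,r_2}(K)\,N_{r_2,r_3}(K)$ to get existence of the limit (you invoke Fekete's subadditive lemma for $\log M(e^{-t})$, while the paper proves the same $\limsup\le\inf$ statement by hand for $t(\rho)$), and both compare the resulting quantity with $\dim_AK$ in two directions, using the doubling property to handle the range $R\in[b,\varepsilon)$ where the Assouad estimate is not directly available. The only differences are cosmetic (factoring the supremum through the logarithm into $M(\rho)$, and naming Fekete), so no further commentary is needed.
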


\begin{proof}
First, we prove that
\begin{equation}
\dim _{A}K=\overline{\lim_{\rho \rightarrow 0}}\left( \sup\limits_{R<|K|}%
\frac{\log N_{\rho R,R}(K)}{\log R-\log (\rho R)}\right) .  \label{def}
\end{equation}
Arbitrarily choose a real $\alpha$ such that $\alpha >\overline{%
\lim\limits_{\rho \rightarrow 0}}\left( \sup\limits_{R<|K|}\frac{\log
N_{\rho R,R}(K)}{\log R-\log (\rho R)}\right) , $ there exists $\delta \in
(0,1)$ such that, for all $\rho <\delta ,$ we have $\sup\limits_{R<|K|}\frac{%
\log N_{\rho R,R}(K)}{\log R-\log (\rho R)}<\alpha $, that is ,
\begin{equation*}
N_{r,R}(K)\leq (R/r)^{\alpha },
\end{equation*}%
for $0<r<\delta R<R<|K|$. On the other hand, there exits a constant $%
c_{\delta }>0$ such that $N_{r,R}(K)\leq c_{\delta }$, for $\delta R<r<R$.
Hence, for all $0<r<R<|K|$, we have
\begin{equation*}
N_{r,R}(K)\leq c_{\delta }(R/r)^{\alpha },
\end{equation*}%
which implies that $\alpha \geq \dim _{A}K.$ Since $\alpha$ is arbitrarily
chosen, we have
\begin{equation*}
\overline{\lim\limits_{\rho \rightarrow 0}}\left( \sup\limits_{R<|K|}\frac{%
\log N_{\rho R,R}(K)}{\log R-\log (\rho R)}\right) \geq \dim _{A}K.
\end{equation*}

Suppose that $\alpha $ is a fixed number such that, for all $R<b$,
\begin{equation*}
N_{\rho R,R}(K)\leq c(R/\rho R)^{\alpha },
\end{equation*}%
where $b$ is a constant. Then%
\begin{equation*}
\sup\limits_{R<b}\frac{\log N_{\rho R,R}(K)}{\log R-\log (\rho R)}\leq \frac{%
\log c}{\log R-\log (\rho R)}+\alpha .
\end{equation*}%
Taking limit on both sides, we have $\overline{\lim\limits_{\rho \rightarrow
0}}\left( \sup\limits_{R<b}\frac{\log N_{\rho R,R}(K)}{\log R-\log (\rho R)}%
\right) \leq \alpha .$ Using the doubling property of $K,$ we have
\begin{equation*}
N_{\rho R,R}(K)\leq N_{\rho b/2,b/2}(K)\cdot N_{b/2,|K|}(K),
\end{equation*}%
for $b\leq R<|K|$, which implies%
\begin{equation*}
\overline{\lim\limits_{\rho \rightarrow 0}}\left( \sup\limits_{R<|K|}\frac{%
\log N_{\rho R,R}(K)}{\log R-\log (\rho R)}\right) =\overline{%
\lim\limits_{\rho \rightarrow 0}}\left( \sup\limits_{R<b}\frac{\log N_{\rho
R,R}(K)}{\log R-\log (\rho R)}\right) \leq \alpha .
\end{equation*}%
Since the inequality holds for all $\alpha >\dim _{A}K,$ it follows that
\begin{equation*}
\overline{\lim\limits_{\rho \rightarrow 0}}\left( \sup\limits_{R<|K|}\frac{%
\log N_{\rho R,R}(K)}{\log R-\log (\rho R)}\right) \leq \dim _{A}K,
\end{equation*}%
which finishes the proof of (\ref{def}).

We write
\begin{equation*}
t(\rho )=\sup\limits_{R}\frac{\log N_{\rho R,R}(K)}{-\log \rho }.
\end{equation*}%
To obtain the formula (\ref{tttt}), by (\ref{def}), it is sufficient to show
that the limit of $t(\rho )$ exists as $\rho$ tends to $0$.

Given $\rho>0 $. For any $\rho ^{\prime }<\rho ,$ there exists an integer $m$
such that
\begin{equation*}
\rho ^{m+1}\leq \rho ^{\prime }<\rho ^{m}.
\end{equation*}%
Since $N_{r_{1},r_{3}}(K)\leq N_{r_{1},r_{2}}(K)N_{r_{2},r_{3}}(K)$ for $%
r_{1}<r_{2}<r_{3},$ it follows that
\begin{equation*}
N_{(\rho ^{\prime })R,R}(K)\leq N_{\rho ^{m+1}R,R}(K)\leq \left(
\sup_{r}N_{\rho r,r}(K)\right) ^{m+1}.
\end{equation*}%
Hence, we have that
\begin{equation*}
\left\vert \frac{\log N_{(\rho ^{\prime })R,R}(K)}{\log \rho ^{\prime }}%
\right\vert \leq \left\vert \frac{\log \left( \sup_{r}N_{\rho r,r}(K)\right)
^{m+1}}{\log (\rho ^{\prime }/\rho ^{m+1})+(m+1)\log \rho }\right\vert ,
\end{equation*}%
and it implies%
\begin{equation*}
\underset{\rho ^{\prime }\rightarrow 0}{\overline{\lim }}t(\rho ^{\prime
})\leq \lim_{m\rightarrow \infty }\left\vert \frac{\log \left(
\sup_{r}N_{\rho r,r}(K)\right) ^{m+1}}{\log (\rho ^{\prime }/\rho
^{m+1})+(m+1)\log \rho }\right\vert =t(\rho )
\end{equation*}%
due to $1\leq \rho ^{\prime }/\rho ^{m+1}\leq \rho ^{-1}.$ Therefore, we
obtain that
\begin{equation*}
\underset{\rho ^{\prime }\rightarrow 0}{\overline{\lim }}t(\rho ^{\prime
})\leq \inf_{\rho }t(\rho )\leq \underset{\rho ^{\prime }\rightarrow 0}{%
\underline{\lim }}t(\rho ^{\prime }),
\end{equation*}%
that is,
\begin{equation*}
\lim_{\rho \rightarrow 0}t(\rho )=\inf_{\rho }t(\rho ).
\end{equation*}%
On the other hand, since $K$ is doubling, the
\begin{equation*}
\lim_{\rho \rightarrow 0}\left( \sup\limits_{R<\varepsilon _{1}}\frac{\log
N_{\rho R,R}(K)}{-\log \rho }\right) =\lim_{\rho \rightarrow 0}\left(
\sup\limits_{R<\varepsilon _{2}}\frac{\log N_{\rho R,R}(K)}{-\log \rho }%
\right) .
\end{equation*}
\end{proof}

\begin{proof}[Proof of Theorem 3]
\

Fix a point $x_{0}\in K.$ It is clear that $h\sim \alpha _{x_{0}}.$ By (\ref%
{E:equ}), we have that, for $r<R,$%
\begin{eqnarray*}
\left\vert \alpha _{x_{0}}(r)\log r-h\left( r\right) \log r\right\vert &\leq
&C, \\
\left\vert \alpha _{x_{0}}(R)\log R-h\left( R\right) \log R\right\vert &\leq
&C\text{.}
\end{eqnarray*}%
Hence%
\begin{eqnarray}
&&\left\vert \frac{{h\left( R\right) \log R-h\left( r\right) \log r}}{{\log
R-\log r}}-\frac{{\alpha _{x_{0}}(R)\log R-\alpha _{x_{0}}\left( r\right)
\log r}}{{\log R-\log r}}\right\vert  \notag \\
&\leq &\left\vert \frac{{\alpha _{x_{0}}(R)\log R-h\left( R\right) \log R}}{{%
\log R-\log r}}\right\vert +\left\vert \frac{{\alpha _{x_{0}}(r)\log
r-h\left( r\right) \log r}}{{\log R-\log r}}\right\vert  \label{307} \\
&\leq &\frac{2C}{|\log R/r|}.  \notag
\end{eqnarray}

Suppose $k$ is the smallest number of balls with radius $r$ needed to cover $%
B(x,R),$ i.e., suppose $B(x,R)$ is covered by $B(y_{1},r),\cdots
,B(y_{k},r). $ In fact, we can choose
\begin{equation}
k=N_{r,R}(K).  \label{wanz1}
\end{equation}%
Then
\begin{equation*}
\mu (B(x,R))\leq \sum\limits_{i=1}^{k}\mu (B(y_{i},r))
\end{equation*}%
which implies
\begin{equation*}
\frac{\mu (B(x,R))}{\max_{y\in K}\mu (B(y,r))}\leq k.
\end{equation*}%
Using (\ref{self1}), we have
\begin{equation}
\lambda ^{-2}\frac{\mu (B(x_{0},R))}{\mu (B(x_{0},r))}\leq k.  \label{wanz2}
\end{equation}

We also assume $p$ is the largest number of disjoint $(r/2)$-balls with
centers in $B(x,R),$ for example, $B(z_{1},r/2),\cdots ,B(z_{p},r/2)$ are
pairwise disjoint. By the routine argument, we have
\begin{equation*}
k\leq p.
\end{equation*}%
In the same way,
\begin{equation*}
p\min_{y\in K}\mu (B(y,r/2))\leq \sum\limits_{i=1}^{p}\mu (B(z_{i},r))\leq
\mu (B(x,R+r))\leq \mu (B(x,2R)).
\end{equation*}%
Therefore, using (\ref{self1}), we have
\begin{equation}
k\leq p\leq \frac{\mu (B(x,2R))}{\min_{y}\mu (B(y,r/2))}\leq \lambda ^{2}%
\frac{\mu (B(x_{0},2R))}{\mu (B(x_{0},r/2))}.  \label{wanz3}
\end{equation}

Using (\ref{***}), the measure $\mu $ is doubling, i.e., there is a constant
$D>0$ such that
\begin{eqnarray*}
\mu (B(x_{0},2R)) &\leq &D\mu (B(x_{0},R)), \\
\mu (B(x_{0},r/2)) &\geq &D^{-1}\mu (B(x_{0},r)).
\end{eqnarray*}%
Then (\ref{wanz3}) shows that
\begin{equation}
k\leq (\lambda D)^{2}\frac{\mu (B(x_{0},R))}{\mu (B(x_{0},r))}.
\label{wanz4}
\end{equation}%
Combining (\ref{wanz1}), (\ref{wanz2}) and (\ref{wanz4}), we obtain that
\begin{eqnarray}
&&\frac{\log \lambda ^{-2}}{\log R-\log r}+\frac{\alpha _{x_{0}}(R)\log
R-\alpha _{x_{0}}(r)\log r}{\log R-\log r}  \notag \\
&\leq &\frac{\log N_{r,R}(K)}{\log R-\log r}  \label{306} \\
&\leq &\frac{\log (\lambda D)^{2}}{\log R-\log r}+\frac{\alpha
_{x_{0}}(R)\log R-\alpha _{x_{0}}(r)\log r}{\log R-\log r}.  \notag
\end{eqnarray}%
By Lemma \ref{qq}, (\ref{307}) and (\ref{306}), we obtain that
\begin{equation*}
\dim _{A}K=\lim_{\rho \rightarrow 0}\left( \sup_{R}\frac{h(R)\log R-h(\rho
R)\log (\rho R)}{-\log \rho }\right) .
\end{equation*}
\end{proof}


\end{document}